\newtheorem{theorem}{Theorem}[section]
\newtheorem{lemma}[theorem]{Lemma}
\newtheorem{corollary}[theorem]{Corollary} 
\newtheorem{remark}{Remark}%
\newcommand{\dd}{\,\mathrm{d} }
\begin{document}

\title[Eigenvalues of the magnetic Dirichlet Laplacian with constant magnetic field on disks in the strong field limit]{Eigenvalues of the magnetic Dirichlet Laplacian with constant magnetic field on disks in the strong field limit}

\author*[1]{\fnm{Matthias} \sur{Baur} }
\email{matthias.baur@mathematik.uni-stuttgart.de}

\author[1]{\fnm{Timo} \sur{Weidl}}

\affil[1]{\orgdiv{Institute of Analysis, Dynamics and Modeling, Department of Mathematics}, \orgname{University of Stuttgart}, \orgaddress{\street{Pfaffenwaldring 57}, \postcode{70569} \city{Stuttgart},  \state{BW}, \country{Germany}}}

\abstract{We consider the magnetic Dirichlet Laplacian with constant magnetic field on domains of finite measure. First, in the case of a disk, we prove that the eigenvalue branches with respect to the field strength behave asymptotically linear with an exponentially small remainder term as the field strength goes to infinity. We compute the asymptotic expression for this remainder term. Second, we show that for sufficiently large magnetic field strengths, the spectral bound corresponding to the P\'olya conjecture for the non-magnetic Dirichlet Laplacian is violated up to a sharp excess factor which is independent of the domain.}

\keywords{Eigenvalue, Laplacian, Magnetic field, Disk, Spectral bound}



\maketitle

\section{Introduction}\label{sec1}

In this paper, we consider the magnetic Laplacian with a constant magnetic field on an open set $\Omega \subset \mathbb{R}^2$ of finite Lebesgue measure. Let the vector potential $A:\mathbb{R}^2 \rightarrow \mathbb{R}^2$ be given by the standard linear gauge 
\begin{align}
A(x_1,x_2)= \frac{B}{2}\begin{pmatrix}-x_2 \\ x_1\end{pmatrix} \nonumber
\end{align}
where the scalar $B \in\mathbb{R}$ is the strength of the constant magnetic field. The magnetic Laplacian $H_B^\Omega$ with constant magnetic field and Dirichlet boundary condition is realized by Friedrichs extension of the quadratic form
\begin{align}
a[u]:= \int_\Omega |(-i\nabla +A) u |^2 \dd x, \qquad u \in C_0^\infty(\Omega). \nonumber
\end{align}
Just as the non-magnetic Dirichlet Laplacian, the operator defined via $a[\, \cdot \, ]$ is positive, self-adjoint and has compact resolvent. It admits a sequence of real, positive eigenvalues $\lbrace \lambda_n(\Omega,B) \rbrace_{n\in\mathbb{N}}$ of finite multiplicity that accumulate at infinity only. The corresponding eigenfunctions $\lbrace \psi_n \rbrace_{n\in\mathbb{N}}$ satisfy the weak eigenvalue equation
\begin{align}
\int_\Omega (-i\nabla +A) \psi_n \cdot \overline{(-i\nabla +A) \varphi }  \dd x =\lambda_n(\Omega,B) \int_\Omega \psi_n \overline{\varphi} \dd x, \qquad  \varphi \in C_0^\infty(\Omega). \nonumber
\end{align}
For domains $\Omega$ with regular boundary, in particular for disks, this is equivalent to the strong eigenvalue equation
\begin{align}
(-i\nabla +A)^2 \psi_n  & = \lambda_n(\Omega,B) \psi_n  \qquad \, \text{in } \Omega,  \nonumber \\
\psi_n &= 0 \qquad \qquad  \qquad \text{on } \partial \Omega, \nonumber
\end{align}
where $(-i\nabla +A)^2$ abbreviates the classical differential operator $(-i \partial_1 + A_1)^2 + (-i \partial_2 + A_2)^2$. As usual, the eigenvalues $\lambda_n(\Omega,B)$ shall be sorted by magnitude, i.e.
\begin{align}
0 < \lambda_1(\Omega,B) \leq \lambda_2(\Omega,B)\leq ... \leq \lambda_n(\Omega,B) \leq ... , \nonumber
\end{align}
counting multiplicities. From the eigenvalues of $H_B^\Omega$ one defines for any $\gamma \geq 0$ the so-called Riesz means or eigenvalue moments
\begin{align}
\mathrm{tr} (H_B^\Omega - \lambda)_-^\gamma &:= \begin{cases} \# \lbrace n \, : \, \lambda_n(\Omega,B) < \lambda  \rbrace, &   \quad \gamma =0, \nonumber \\
  \sum\limits_{n: \lambda_n(\Omega,B)<\lambda} (\lambda - \lambda_n(\Omega,B))^\gamma , & \quad \gamma >0.
\end{cases} \nonumber
\end{align}
If $\gamma = 0$, the expression $\mathrm{tr} (H_B^\Omega - \lambda)_-^0$ is more commonly called ''counting function'' and denoted by $N(H_B^\Omega, \lambda)$. 

It is well known that the eigenvalues $\lbrace \lambda_n(\Omega,B) \rbrace_{n\in\mathbb{N}}$ as functions over $B\in\mathbb{R}$ can be identified piecewise with real-analytic eigenvalue branches. This is a classical result of analytic perturbation theory, see for example Kato \cite[Chapter VII \S 3 and \S 4]{Kato1995}. In this framework, the operators $\{ H_B^\Omega \}$ form a type (B) self-adjoint holomorphic family. The eigenvalue branches that represent the spectra of the family $\{ H_B^\Omega \}$ typically do not maintain a particular order, since different branches can intersect. 

We are interested in the behavior of the spectrum of $H_B^\Omega$ as the field strength $B$ becomes large. Our first result (Theorem \ref{thm:eigenvalue_asymptotics_disk}) deals with the special case where $\Omega$ is a disk. Here, all real-analytic eigenvalue branches of the spectra of $\lbrace H_B^\Omega \rbrace_{B\in\mathbb{R}}$ are given in terms of roots of confluent hypergeometric functions. We compute two term asymptotics of all analytic eigenvalue branches. This result generalizes a theorem by Helffer and Persson Sundqvist \cite{Helffer2017}.

In the second part of the paper, we are concerned with spectral bounds on the sorted eigenvalues $\lambda_n(\Omega,B) $ as well as the Riesz means $\mathrm{tr} (H_B^\Omega - \lambda)_-^\gamma$. To locate our work in the existing literature, let us briefly summarize important related results. 

For the non-magnetic Dirichlet Laplacian, i.e.\ the case $B=0$, the celebrated Weyl law \cite{Weyl1912} (see also \cite[Chapter 3.2]{Frank2022}) states that for any domain $\Omega \subset \mathbb{R}^2$
\begin{align}\label{eq:weyl_law}
N(H_0^\Omega, \lambda)= \mathrm{tr} (H_B^\Omega - \lambda)_-^0 = \frac{1}{4\pi}|\Omega| \lambda (1+o(1))  \qquad \text{as } \lambda \rightarrow +\infty.
\end{align}
By integration, one can show that the Riesz means satisfy for any $\gamma\geq 0$
\begin{align} 
\mathrm{tr} (H_0^\Omega - \lambda)_-^\gamma = L_{\gamma,2}^{\mathrm{cl}}  |\Omega| \lambda^{1+\gamma} (1+o(1))  \qquad \text{as } \lambda \rightarrow +\infty  \nonumber
\end{align}
where $L_{\gamma,2}^{\mathrm{cl}} := (4\pi (1+\gamma) )^{-1}$. P\'olya conjectured that the asymptotic expression in \eqref{eq:weyl_law} is actually an upper bound, i.e.\ for any open domain $\Omega \subset \mathbb{R}^2$ holds
\begin{align} 
N(H_0^\Omega, \lambda) \leq  \frac{1}{4\pi}|\Omega| \lambda , \qquad  \lambda \geq 0. \nonumber
\end{align}
or equivalently
\begin{align}  \label{eq:polya_bound}
\lambda_n(\Omega, 0) \geq  \frac{4\pi n}{|\Omega|}, \qquad n\in\mathbb{N}.
\end{align}
He then gave a proof for so-called tiling domains \cite{Polya1961}. However, the case of general domains is still open. Even for disks, P\'olya's conjecture remained unresolved for several decades until it was finally confirmed in 2022 by Filonov et al.\ \cite{Filonov2022a}. 

The Aizenman-Lieb identity \cite{Aizenman1978} allows lifting an inequality for a Riesz mean of order $\gamma \geq 0$ to any order $\gamma' > \gamma$, turning the constant $ L_{\gamma,2}^{\mathrm{cl}}$ into $L_{\gamma',2}^{\mathrm{cl}}$. This means that any domain satisfying P\'olya's inequality also satisfies
\begin{align}
\mathrm{tr} (H_0^\Omega - \lambda)_-^\gamma \leq L_{\gamma,2}^{\mathrm{cl}}  |\Omega| \lambda^{1+\gamma}, \qquad  \lambda \geq 0. \label{eq:riesz_means_bound_DL_conj}
\end{align}
for any $\gamma \geq 0$. As an generalization of P\'olya's conjecture, it is conjectured that \eqref{eq:riesz_means_bound_DL_conj} holds for arbitrary domains.

In 1972 Berezin \cite{Berezin1973} gave a proof of the inequality \eqref{eq:riesz_means_bound_DL_conj} for $\gamma=1$ and arbitrary domains. Together with the Aizenman-Lieb identity, this confirmed the bounds \eqref{eq:riesz_means_bound_DL_conj} for any $\gamma\geq 1$. For $0\leq \gamma <1$ however, the inequalities \eqref{eq:riesz_means_bound_DL_conj} are still unproven. In this case, Berezin's result can still be used to find upper bounds similar to \eqref{eq:riesz_means_bound_DL_conj}, but the derived constants differ from the conjectured constants $L_{\gamma,2}^{\mathrm{cl}}$ by certain excess factors, see for instance \cite[Corollary 3.30]{Frank2022}. Combining the two arguments for $\gamma \geq 1$ and $0\leq \gamma <1$, Berezin's bound implies for $\gamma \geq 0$ the inequalities
\begin{align}
\mathrm{tr} (H_0^\Omega - \lambda)_-^\gamma \leq R_\gamma L_{\gamma,2}^{\mathrm{cl}}  |\Omega| \lambda^{1+\gamma}, \qquad \lambda \geq 0. \label{eq:riesz_means_bound_DL_known}
\end{align}
where 
\begin{align}
R_\gamma = \begin{cases}
2 & \text{ if } \gamma=0, \\
2 \left(\dfrac{\gamma}{\gamma+1}\right)^\gamma  & \text{ if } 0< \gamma < 1,\\
1 & \text{ if } \gamma \geq 1.
\end{cases} \nonumber
\end{align}
The bounds \eqref{eq:riesz_means_bound_DL_known} are believed to be non-sharp for $0\leq \gamma < 1$ since the excess factors $R_\gamma$ can be omitted for all domains satisfying P\'olya's bound, e.g.\ tiling domains and disks. 

Weyl's law \eqref{eq:weyl_law} continues to hold if an additional fixed magnetic vector potential is introduced. This has been proven under minimal assumptions on the magnetic vector potential, see \cite[Theorem A.1]{Frank2009b}. Asymptotics for Riesz means also remain unchanged. It is therefore reasonable to ask whether inequalities similar to \eqref{eq:riesz_means_bound_DL_conj} resp.~\eqref{eq:riesz_means_bound_DL_known} hold with magnetic fields and what the smallest possible values of the excess factors are, if they cannot be omitted. Following up a remark by Helffer, Laptev and Weidl \cite{Laptev2000} proved that \eqref{eq:riesz_means_bound_DL_conj} continues to hold if $H_0^\Omega$ is replaced by a magnetic Laplacian with arbitrary magnetic field in case $\gamma \geq 3/2$. It follows that bounds of the form \eqref{eq:riesz_means_bound_DL_known} also hold if $\gamma < 3/2$, but with larger excess factors than those known in the non-magnetic case, see \cite[Appendix]{Frank2009}, \cite[Theorem 3.1]{Frank2009b}. For the special case of the constant magnetic field, Erd\H{o}s, Loss and Vougalter \cite{Erdoes2000} established \eqref{eq:riesz_means_bound_DL_conj} for $\gamma = 1$. This means that \eqref{eq:riesz_means_bound_DL_known} extends to
\begin{align}
\mathrm{tr} (H_B^\Omega - \lambda)_-^\gamma \leq R_\gamma L_{\gamma,2}^{\mathrm{cl}}  |\Omega| \lambda^{1+\gamma}, \qquad  \lambda \geq 0, B\in\mathbb{R}, \label{eq:riesz_means_bound_ML_known}
\end{align}
with the same excess factors $R_\gamma$ as without magnetic field. 

In contrast to its non-magnetic version \eqref{eq:riesz_means_bound_DL_known}, it has been shown that \eqref{eq:riesz_means_bound_ML_known} is sharp for any $\gamma\geq 0$, see Frank, Loss and Weidl \cite{Frank2009}. Frank \cite[Theorem 3.6]{Frank2009b} even proved that \eqref{eq:riesz_means_bound_ML_known} is sharp for any given domain $\Omega$: The bounds \eqref{eq:riesz_means_bound_ML_known} can be saturated when letting $B$ and $\lambda$ tend to infinity simultaneously in a suitable way. This is remarkable since the origin of the excess factors $R_\gamma$ in \eqref{eq:riesz_means_bound_DL_known} and \eqref{eq:riesz_means_bound_ML_known} is non-spectral. What is believed to be an artifact of a rough estimate in the non-magnetic case turns out to be sharp for constant magnetic fields. Our second result is an alternative proof of the sharpness of \eqref{eq:riesz_means_bound_ML_known} based on estimates of eigenvalues of the disk. Like Frank, we will show that \eqref{eq:riesz_means_bound_ML_known} is sharp for any given domain $\Omega$ and any $\gamma\geq 0$ (Theorem \ref{thm:magnetic_polya_general_set} and Corollary \ref{cor:magnetic_disk_trace}). 

It should also be noted that the situation of constant magnetic fields differs from another important special case, the case of $\delta$-like magnetic fields which are induced by the so-called Aharonov-Bohm potentials. Whereas P\'olya's bound is violated for constant magnetic fields, it has recently been shown that P\'olya's bound continues to hold for centered Aharonov-Bohm potentials on disks, see Filonov et al.\ \cite{Filonov2023}.

The paper is structured as follows. In Section \ref{sec:statement_of_results}, we review the eigenvalue problem of the magnetic Dirichlet Laplacian with constant magnetic field on disks and state our two main results. In Section \ref{sec:proof_thm2_1} and \ref{sec:proof_thm2_2}, we give the proofs to our main results.

\section{Preliminaries and statement of results} \label{sec:statement_of_results}

To present our results, we need to briefly summarize how the eigenvalue problem for the magnetic Dirichlet Laplacian with constant magnetic field on disks is solved. It is closely related to the eigenvalue problem of the harmonic oscillator confined to disks, see e.g.\ \cite{Son2014} and \cite{Valentim2019}. We follow the outline of the latter reference and modify it to fit the eigenvalue problem of the magnetic Laplacian on disks.

\subsection{Explicit solution of the eigenvalue problem on disks}

Let $D_R$ denote a disk of radius $R>0$. The eigenvalue problem on $D_R$ can be explicitly solved in terms of zeros of special functions. Since the eigenvalues are invariant under rigid transformations of $\Omega$, we may assume the disk $D_R$ is centered at the origin. In polar coordinates, the strong eigenvalue equation turns into
\begin{align}
\left[ -\frac{\partial^2}{ \partial r^2} - \frac{1}{r} \frac{\partial}{ \partial r} - \frac{1}{r^2} \frac{\partial^2}{ \partial \varphi^2} - i B \frac{\partial}{ \partial \varphi} + \frac{B^2}{4}r^2  \right] \psi(r, \varphi) & = \lambda \psi(r, \varphi), \nonumber
\end{align}
where $ r\in(0, R), \, \varphi \in[0, 2\pi), $ while the boundary condition turns into
\begin{align}
 \psi(R, \varphi) &= 0 \nonumber
\end{align}
for any $\varphi \in [0, 2\pi)$. We see that the magnetic Laplacian $(-i\nabla +A)^2$ commutes with the angular momentum operator $L_z = -i \frac{\partial}{ \partial \varphi}$, so the operators have a shared eigenbasis. The corresponding eigenfunctions are given by
\begin{align}
 \psi(r, \varphi) = Z(r) e^{i l \varphi},   \qquad\qquad r\in(0, R) , \quad \varphi \in [0, 2\pi ), \nonumber
\end{align}
where $l\in\mathbb{Z}$ and $Z(r)$ solves the one-dimensional differential equation
\begin{align}\label{eq:magnetic_disk_radial_diffeq}
\left[ \frac{d^2}{ d r^2} +\frac{1}{r} \frac{d}{ d r} - \left( \frac{B^2}{4}r^2 + (B l   - \lambda) + \frac{l^2}{r^2} \right)  \right] Z(r) & = 0, \qquad r\in(0, R),\\ 
\label{eq:magnetic_disk_radial_diffeq_bndcond} Z(R)&=0.
\end{align}
Note that the radial function $Z$ must also have a certain regularity at $r=0$ to ensure that $\psi$ is an admissible eigenfunction, i.e.\ $\psi\in H_0^1(D_R)$.

We assume from now on that $B>0$ since \eqref{eq:magnetic_disk_radial_diffeq} is invariant under simultaneous change of the signs of $B$ and $l$. The solution to \eqref{eq:magnetic_disk_radial_diffeq} can then be written as 
\begin{align}
Z(r) = e^{-\frac{s}{2}} s^\frac{|l|}{2} Y(s), \qquad s = \frac{B}{2} r^2. \nonumber
\end{align}
Here, $Y(s)$ solves
\begin{align}
\left[ s \frac{d^2}{ds^2} + (|l|+1 -s)\frac{d}{ds} - \frac{1}{2} \left( l+ |l|+1- \frac{ \lambda}{B}    \right) \right] Y(s) = 0 \nonumber
\end{align}
which is an instance of Kummer's equation
\begin{align}
\left[ z \frac{d^2}{dz^2} + (b -z)\frac{d}{dz} - a \right] y(z) = 0, \qquad a,b,z \in\mathbb{C},\; b\neq 0,-1,-2,... \; . \nonumber
\end{align}
Solutions of Kummer's equation are usually given as linear combinations of Kummer's confluent hypergeometric function $M(a,b,z)$ (also often denoted by ${_1}F_1(a,b,z)$) and Tricomi's confluent hypergeometric function $U(a,b,z)$. Of those two functions, the former is regular at $z = 0$ and eventually leads to eigenfunctions $\psi \in H^1_0(D_R)$ whereas the latter has a singularity at $z=0$ that eventually leads to $|\psi|^2 $ and/or $ |\nabla \psi|^2 $ not being locally integrable around the origin. Therefore, only $M(a,b,z)$ is relevant to us. Recall that Kummer's function $M(a,b,z)$ is entire in $a$ and $z$ and given by the power series
\begin{align}
M(a,b,z) = \sum\limits_{k=0}^\infty \frac{(a)_k}{(b)_k k!} z^k, \qquad b \neq 0, -1,-2,... \nonumber
\end{align}
where 
\begin{align}
(c)_k = \begin{cases} 1 & \text{for } k =0, \\
 c \cdot (c+1) \cdot ... \cdot (c+k-1) & \text{for } k \geq 1,  \end{cases} \qquad c \in \mathbb{C}, \; k\in\mathbb{N}_0, \nonumber
\end{align}
denotes the Pochhammer symbol (for references on confluent hypergeometric functions see \cite{Buchholz1953}, \cite[Vol. I, Chapter VI]{Bateman1953}, \cite[Chapter 13]{Abramowitz1972} and \cite[\href{https://dlmf.nist.gov/13}{Chapter 13}]{NIST_DLMF}). In order to be an eigenvalue, the yet unspecified parameter $\lambda$ must be chosen in such a way that $Z(r)$ satisfies the Dirichlet boundary condition \eqref{eq:magnetic_disk_radial_diffeq_bndcond}. This means that $\lambda$ must be a solution to the implicit equation
\begin{align}
M \left(\frac{1}{2} \left( l+ |l|+1- \frac{ \lambda}{B}    \right)  ,|l|+1, \frac{BR^2}{2}  \right) = 0. \label{eq:magnetic_disk_eigval_transcendental}
\end{align}

It is known that the function $a \mapsto M(a,b,z)$, where $a\in\mathbb{R}$, $b>0$, $z\geq 0$, has no non-negative roots and an infinite number of simple, negative roots $a_m(b,z)$ with accumulation point at $-\infty$ (see e.g.\ \cite[\S 17]{Buchholz1953}). Let us assume the roots $a_m(b,z)$ are sorted in decreasing order. We then denote the solutions of the implicit equation \eqref{eq:magnetic_disk_eigval_transcendental} by $\lambda_{m,l}(B)$ according to the relation 
\begin{align}
\lambda_{m,l}(B) = \left(l+|l|+1 -2a_m\left(|l|+1,\frac{BR^2}{2}\right) \right) B. \label{eq:eigenvalue_lambda_a_m_relation}
\end{align}
The eigenvalues $\lambda_{m,l}(B)$ then satisfy 
\begin{align}
\lambda_{1,l}(B) \leq \lambda_{2,l}(B) \leq ...  \nonumber
\end{align}
By the implicit function theorem, all maps $z\mapsto a_m(b,z)$ are real analytic functions of $z$. This implies that all maps $B \mapsto \lambda_{m,l}(B)$, $n\in\mathbb{N}$, $l \in\mathbb{Z}$, are real analytic functions of $B$.

Finally, the eigenfunctions
\begin{align}
\psi_{m,l}(r, \varphi) = Z_{m,l}(r) e^{i l \varphi} , \nonumber
\end{align}
where $Z_{m,l}$ are the solutions of the radial equation \eqref{eq:magnetic_disk_radial_diffeq} with $\lambda = \lambda_{m,l}(B)$, $m\in\mathbb{N}$, $l\in\mathbb{Z}$, form a complete orthonormal basis of $L^2(D_R)$. This is a consequence of $ \lbrace Z_{m,l} \rbrace_{m\in\mathbb{N}}$ forming a complete orthonormal basis of $L^2((0,R), r\, dr)$ which follows from classical Sturm-Liouville theory (for more details see \cite{Son2014}). The set $\lbrace \lambda_{m,l}(B) \, : \, m\in\mathbb{N}, l\in\mathbb{Z} \rbrace$ gained by solving \eqref{eq:magnetic_disk_eigval_transcendental} is thus indeed the complete set of all eigenvalues. We have found a complete description of all eigenvalue branches of the magnetic Dirichlet Laplacian with constant magnetic field on $D_R$. Ordering $\lbrace \lambda_{m,l}(B) \, : \, m\in\mathbb{N}, l\in\mathbb{Z} \rbrace$ by magnitude and relabeling results in the set of sorted eigenvalues $\lbrace \lambda_n(D_R,B) \rbrace_{n\in\mathbb{N}}$.

\subsection{Asymptotics for eigenvalue branches of the disk}

Figure \ref{fig:diamagnetism} displays the lowest few eigenvalues $\lambda_n(D_R,B)$ over $B$ for a disk of unit area. It suggests that the eigenvalue branches have an asymptotically linear behavior as $B \rightarrow +\infty$ and there appear sectors that only certain branches can populate. A detailed investigation of the eigenvalue branches $\lambda_{m,l}(B)$ has been carried out in the doctoral thesis of Son \cite[Theorem 3.3.4]{Son2014} where it is proven that 
\begin{align}
\lim_{B \rightarrow +\infty} \frac{\lambda_{m,l}(B)}{B} = l+ |l|+1 + 2(m-1). \nonumber
\end{align}

Son posed the question for the asymptotic expression of the remainder $\lambda_{m,l}(B)/B - (l+ |l|+1 + 2(m-1))$ which is equivalent to asking for the next terms in the asymptotic expansion of $\lambda_{m,l}(B)$. In the case of the ground state (which can be identified with the branch $\lambda_{1,0}(B)$), this question was already addressed by Helffer and Morame \cite[Proposition 4.4]{Helffer2001} in 2001. However, their asymptotic expression is incorrect and was also mistakenly cited by Ekholm, Kova{\v{r}}{\'{\i}}k and Portmann \cite{Ekholm2016} and Fournais and Helffer \cite{Fournais2010}. Helffer and Persson Sundqvist \cite{Helffer2017} corrected the erroneous asymptotic from \cite{Helffer2001} by showing an asymptotic in the semi-classical setting via construction of trial states. Theorem 5.1 of \cite{Helffer2017} asserts that if $D_R$ is a disk of radius $R>0$, then for any $n\in\mathbb{N}$
\begin{align}
\frac{ \lambda_n(D_R, B)}{B}  - 1 = \frac{2}{(n-1)!}\left(\frac{BR^2}{2}\right)^{n} e^{-BR^2/2}(1+O(B^{-1})) \qquad \text{as } B\rightarrow +\infty. \nonumber
\end{align}
Barbaroux et al.\ \cite{Barbaroux2021} gave a generalization of this result for non-constant magnetic fields and fairly generic domains.

This however does not fully answer the question about the second term in the asymptotic expansion of the eigenvalue branches $\lambda_{m,l}(B)$ for all $m\in\mathbb{N}$, $l\in\mathbb{Z}$. The construction of trial states only allows treating eigenvalues sorted in increasing order. As we will see with Corollary \ref{cor:magnetic_disk_lambda_lowest}, the sorted eigenvalues $\lambda_n(\Omega, B)$ coincide with the eigenvalue branches $\lambda_{1,-(n-1)}(B)$ for large enough $B$, so the result by Helffer and Persson Sundqvist only covers the case $m=1$ and $l\leq 0$.

\begin{figure}[p]
\centering
\begin{subfigure}{0.9\textwidth}
\begin{center}
\includegraphics[scale=0.4,trim={0 0.6cm 0cm 1.0cm},clip]{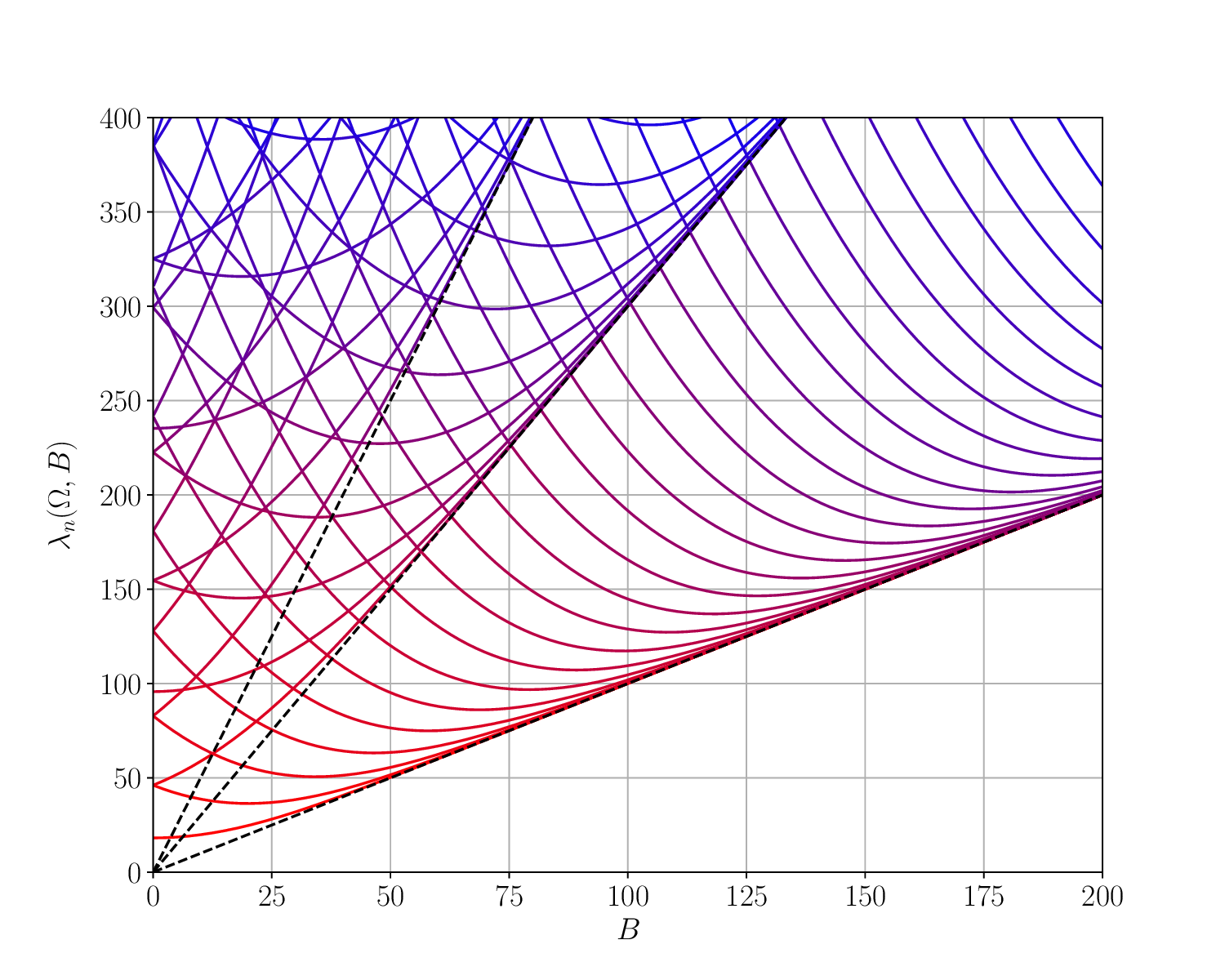}\hspace*{0.3cm}
\end{center}
\caption{} \label{fig:diamagnetism}
\end{subfigure}

\begin{subfigure}{0.9\textwidth}
\begin{center}
\includegraphics[scale=0.4,trim={0 0.6cm 0cm 1.0cm},clip]{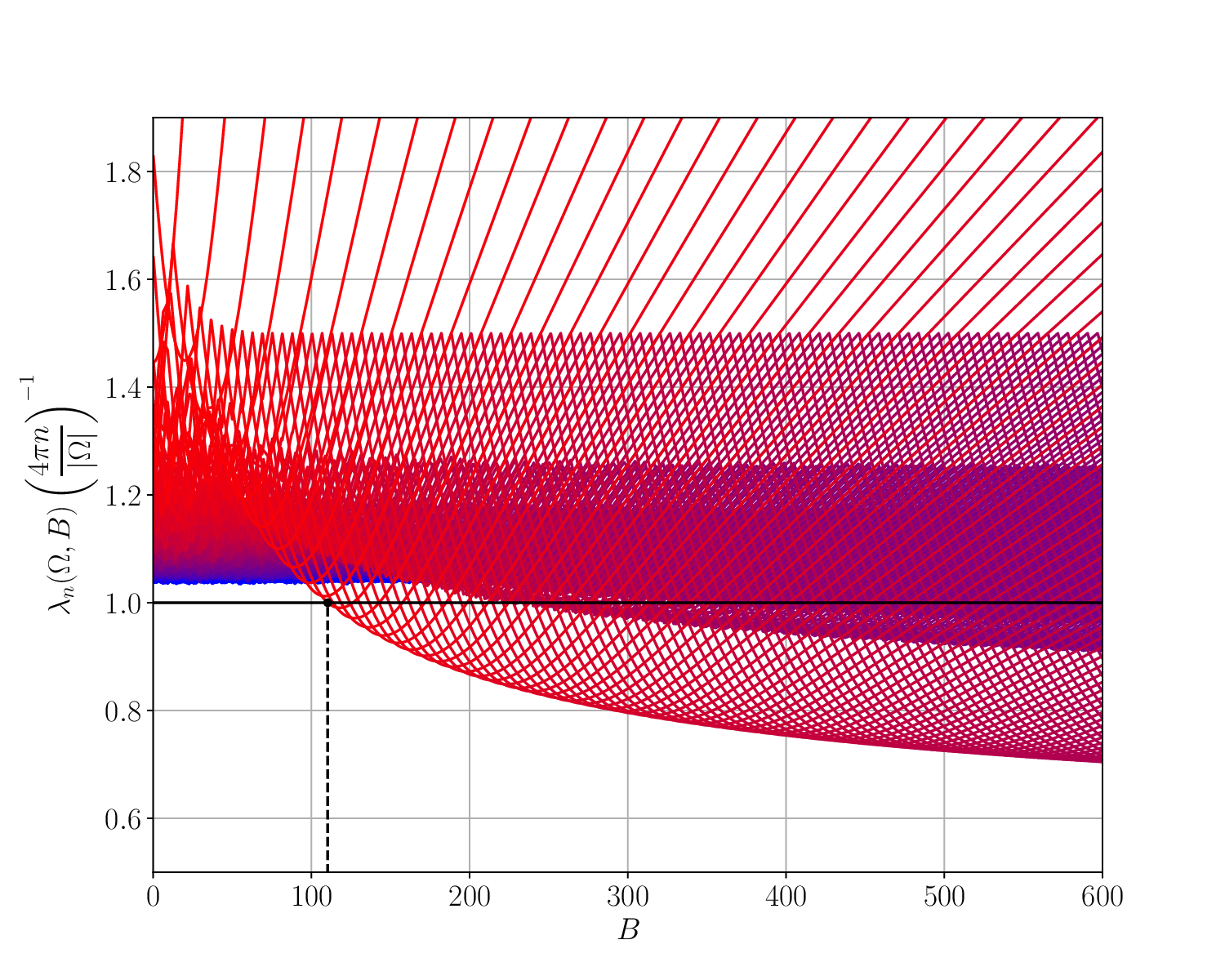} \hspace*{0.2cm}
\end{center}
\caption{} \label{fig:polya}
\end{subfigure}

\caption{Eigenvalues of the magnetic Dirichlet Laplacian on a disk of radius of $R=1/\sqrt{\pi}$ (and hence area $|D_R|=1$). Color of the curves fades from red to blue as $n$ increases. (a) Low eigenvalues $\lambda_n(D_R,B)$ over the field strength $B$. The dashed lines indicate the lines $\lambda = B$, $3B$ and $5B$. (b) The first 500 eigenvalues divided by $4\pi n / |D_R|$ plotted over the field strength $B$. The value $1.0$ associated with the lower bound from the non-magnetic P\'olya conjecture is crossed for the first time at $B_{\mathrm{crit}} \approx 110.335$ by $\lambda_{11}(D_R, B)$. This point is marked by a black dot and a dashed line.  } 
\end{figure}

Our first result gives the second term in the asymptotic expansion for all eigenvalue branches $\lambda_{m,l}(B)$ and therefore generalizes the result by Helffer and Persson Sundqvist.

\begin{theorem} \label{thm:eigenvalue_asymptotics_disk}
For any fixed $m\in\mathbb{N}$, $l\in\mathbb{Z}$  and $R>0$ holds
\begin{align}
\frac{ \lambda_{m,l}(B)}{B} >  l+ |l|+1 + 2(m-1) \qquad \text{for all } B>0 \nonumber
\end{align}
and
\begin{align}
&\frac{ \lambda_{m,l}(B)}{B}  - (l+ |l|+1+2(m-1)) \nonumber \\ 
&\qquad \qquad =  \frac{ 2}{ \Gamma(|l|+m)\Gamma(m)} \left(\frac{BR^2}{2}\right)^{|l|+1+2(m-1)} e^{-BR^2/2}(1+O(B^{-1}))  \nonumber
\end{align}
as $B\rightarrow +\infty$.
\end{theorem}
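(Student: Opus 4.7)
The plan is to translate the theorem into a statement about the zeros $a_m(b,z)$ of Kummer's function via the relation \eqref{eq:eigenvalue_lambda_a_m_relation}. Setting $b=|l|+1$ and $z=BR^2/2$, the two assertions are equivalent to $a_m(b,z)<-(m-1)$ for every $z>0$ and
\begin{align*}
a_m(b,z)+(m-1)=-\frac{z^{b+2(m-1)}}{\Gamma(b+m-1)\,\Gamma(m)}\,e^{-z}\bigl(1+O(z^{-1})\bigr)\qquad\text{as }z\to+\infty;
\end{align*}
multiplication by $-2$ then yields the claimed expansion of $\lambda_{m,l}(B)/B$.

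For the strict inequality I would restrict to the angular momentum $l$ subspace and compare with the free magnetic Laplacian on $\mathbb{R}^2$. In that subspace the free operator has the simple discrete spectrum $\{(l+|l|+1+2(n-1))B:n\in\mathbb{N}\}$ with explicit real-analytic Laguerre--Gauss eigenfunctions. Extension by zero embeds the angular momentum $l$ part of $H_0^1(D_R)$ into the form domain of the free operator without changing the Rayleigh quotient, so the min--max principle delivers $\lambda_{m,l}(B)\geq(l+|l|+1+2(m-1))B$. Equality would force the span of the first $m$ Dirichlet eigenfunctions (extended by zero) to coincide with the spectral subspace of the free operator for eigenvalues up to that value, which is impossible because nontrivial linear combinations of those real-analytic Laguerre--Gauss eigenfunctions cannot vanish on the open set $\mathbb{R}^2\setminus\overline{D_R}$.

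For the asymptotic I would Taylor expand $M(\cdot,b,z)$ around $a=-(m-1)$, using Son's limit $a_m(b,z)\to-(m-1)$ to legitimise the bootstrap, and invert to obtain
\begin{align*}
a_m(b,z)+(m-1)=-\frac{M(-(m-1),b,z)}{M_a(-(m-1),b,z)}+\text{(quadratic correction)}.
\end{align*}
The numerator is a polynomial in $z$ of degree $m-1$ with leading coefficient $(-1)^{m-1}/(b)_{m-1}$, so $M(-(m-1),b,z)=\tfrac{(-1)^{m-1}}{(b)_{m-1}}z^{m-1}(1+O(z^{-1}))$. For $M_a(-(m-1),b,z)$ I would differentiate the classical Poincar\'e asymptotic of $M(a,b,z)$, whose leading part is $\Gamma(b)\Gamma(a)^{-1}e^z z^{a-b}$ together with subdominant algebraic terms of order $z^{-a}$. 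At $a=-(m-1)$ the prefactor $\Gamma(a)^{-1}$ vanishes while $\tfrac{d}{da}\Gamma(a)^{-1}\big|_{a=-(m-1)}=(-1)^{m-1}(m-1)!$, producing the dominant contribution
\begin{align*}
M_a(-(m-1),b,z)=(-1)^{m-1}(m-1)!\,\Gamma(b)\,e^z z^{-(m-1)-b}\bigl(1+O(z^{-1})\bigr),
\end{align*}
with all $\log z$, digamma, and algebraically subdominant terms absorbed into the remainder. Dividing the numerator by the denominator and using the identity $(b)_{m-1}(m-1)!\,\Gamma(b)=\Gamma(b+m-1)\Gamma(m)$ yields the claimed asymptotic, while the quadratic Taylor remainder is negligible: $(a_m+m-1)^2$ is doubly exponentially small and hence much smaller than the leading $O(e^{-z})$ correction even when paired with any reasonable bound on $M_{aa}$.

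The principal obstacle is the controlled differentiation of the Poincar\'e asymptotic of $M(a,b,z)$ in the parameter $a$: one must track the $\log z$ factors produced by $\partial_a z^{a-b}$ and $\partial_a(-z)^{-a}$, absorb the digamma contributions coming from $\partial_a\Gamma(b-a)^{-1}$, and verify that the error terms of the original expansion remain of relative order $O(z^{-1})$ after differentiation and evaluation at the non-positive integer $a=-(m-1)$, so that only the single exponential term $(-1)^{m-1}(m-1)!\Gamma(b)e^z z^{-(m-1)-b}$ survives at leading order.
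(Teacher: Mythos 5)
Your treatment of the strict inequality is correct and genuinely different from the paper's. You compare, within a fixed angular momentum sector, the Dirichlet problem on $D_R$ with the free Landau operator on $\mathbb{R}^2$, whose sector spectrum $\{(l+|l|+1+2(n-1))B\}$ is simple, and exclude equality by real-analyticity of the Laguerre--Gauss eigenfunctions; the paper instead proves $a_m(b,z)<-(m-1)$ through the root-interlacing/inversion analysis of Lemma \ref{lem:am_zm_roots}. One formulation caveat: equality does not force the span of the first $m$ Dirichlet eigenfunctions to \emph{coincide} with the free spectral subspace; the correct argument picks a nontrivial element of that $m$-dimensional trial space orthogonal to the first $m-1$ free sector eigenfunctions, which must then be a multiple of the $m$-th free eigenfunction and hence cannot vanish outside $D_R$ — this is in effect what your final clause uses, so it is a wording issue rather than a gap. (The paper's Lemma \ref{lem:am_zm_roots} also does extra duty later, in Corollary \ref{cor:magnetic_disk_lambda_explicit}, which your route would not provide, but that is irrelevant to this theorem.)

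The asymptotic half, however, has a genuine gap, and it sits exactly where you flag the ``principal obstacle.'' Your Newton-step inversion $\varepsilon_m(z)\approx M(-(m-1),b,z)/M_a(-(m-1),b,z)$ requires two inputs you do not supply: (i) a quantitative a priori bound on $\varepsilon_m(z)$ before the quadratic remainder $\tfrac12\varepsilon_m^2 M_{aa}(\xi,b,z)$ can be discarded, and (ii) a two-term large-$z$ expansion of $M(a,b,z)$ (exponential term with coefficient $1/\Gamma(a)$ plus the algebraic term $\Gamma(b)\Gamma(b-a)^{-1}(\pm z)^{-a}$), uniform in $a$ on a neighborhood of the non-positive integer $-(m-1)$ and differentiable in $a$ with relative error still $O(z^{-1})$. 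As written, (i) is circular: you call $(a_m+m-1)^2$ ``doubly exponentially small,'' but that presupposes the conclusion; at that stage the only available information is Son's $\varepsilon_m(z)=o(1)$, which does not control quantities such as $z^{\varepsilon_m}$ or $\varepsilon_m\log z$ that enter $M_{aa}(\xi)/M_a(-(m-1))$ once $\xi\neq-(m-1)$ (where $1/\Gamma(a)$ no longer vanishes and $\partial_a z^{a-b}$ produces $\log z$). One first needs a crude exponential bound on $\varepsilon_m$ — precisely the paper's intermediate estimate \eqref{eq:epsm_asymptotic_simple}, which it obtains without differentiating any asymptotic expansion, by splitting Kummer's series at $k=m$, factoring $\varepsilon_m$ out of the tail, and invoking standard ${}_2F_2$ asymptotics, followed by a single bootstrap to get the sharp constant. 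Point (ii) is true and can be justified (e.g.\ via the known uniform expansion and Cauchy estimates in a complex $a$-neighborhood), but you acknowledge it and leave it open, so your argument is a viable alternative architecture rather than a complete proof; also note the quadratic remainder is of order $z^{2b+4(m-1)}e^{-2z}$, i.e.\ exponentially smaller than the leading term, not doubly exponentially small.
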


We give the proof of Theorem \ref{thm:eigenvalue_asymptotics_disk} in Section \ref{sec:proof_thm2_1}. We remark that the proof could be extended to gain more orders in the asymptotic expansion.

\subsection{Spectral inequalities and sharpness}

For the constant magnetic field, Erd\H{o}s, Loss and Vougalter \cite{Erdoes2000} proved that 
\begin{align} \label{eq:liyau_bound_ML}
\sum_{k=1}^n \lambda_k(\Omega,B)\geq \frac{2\pi n^2}{|\Omega|},  \qquad  n\in\mathbb{N},
\end{align}
for any $B\in\mathbb{R}$. Earlier, Li and Yau \cite{Li1983} had shown the inequality in the non-magnetic case. It was then realized that the Li-Yau bound is equivalent to Berezin's bound by Legendre transformation, see e.g.\ \cite[Chapter 3.5]{Frank2022}. Similarly, \eqref{eq:liyau_bound_ML} is equivalent to \eqref{eq:riesz_means_bound_ML_known} for $\gamma=1$.

By the same reasoning as without magnetic field, the result by Erd\H{o}s, Loss and Vougalter implies \eqref{eq:riesz_means_bound_ML_known} with the excess factors $R_\gamma$ defined in the introduction and in particular 
\begin{align}\label{eq:lambda_n_bound_ML_known}
\lambda_n(\Omega,B)\geq \frac{1}{2} \cdot \frac{4\pi n}{|\Omega|},  \qquad  n\in\mathbb{N}, B\in\mathbb{R}, 
\end{align}
for any domain $\Omega$. Compared to P\'olya's conjectured bound \eqref{eq:polya_bound} for the non-magnetic case there appears an excess factor of $1/2$. 

Our second main result is a proof of the sharpness of \eqref{eq:lambda_n_bound_ML_known} for any domain $\Omega$ when $B$ is allowed to become large. That is, the excess factor of $1/2$ in \eqref{eq:lambda_n_bound_ML_known} cannot be improved.

\begin{theorem} \label{thm:magnetic_polya_general_set}
Let $\Omega \subset \mathbb{R}^2$ be a domain of finite measure. Then for any $\eta > 0$ there exists some $B_{\eta}>0$ such that for any $B\geq B_{\eta}$ one has at least one eigenvalue $\lambda_n(\Omega, B)$ with
\begin{align}
\frac{1}{2} \leq \lambda_n(\Omega, B) \left(\frac{4\pi n}{|\Omega|}  \right)^{-1} \leq \frac{1}{2}+\eta . \nonumber
\end{align}
\end{theorem}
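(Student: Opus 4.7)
The plan is to exhibit, for each sufficiently large $B$, a specific index $n$ whose ratio $\lambda_n(\Omega,B)|\Omega|/(4\pi n)$ lies just above $1/2$. The lower bound $\frac{1}{2}$ is precisely \eqref{eq:lambda_n_bound_ML_known}, which already follows from the Li-Yau-type inequality \eqref{eq:liyau_bound_ML}; for the matching upper bound, I would use Theorem \ref{thm:eigenvalue_asymptotics_disk} to control the first spectral cluster of the magnetic Dirichlet Laplacian on a disk (the one that accumulates onto the first Landau level $B$), and then transfer the resulting lower bound on the eigenvalue counting function to a general $\Omega$ by a Dirichlet bracketing argument.

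More concretely, fix $\eta>0$ and choose $\epsilon,\delta>0$ so small that $1+\delta \leq (1+2\eta)(1-\epsilon)^2$. A standard Vitali-type argument produces a finite family of pairwise disjoint open disks $D_{R_1},\dots,D_{R_k} \subset \Omega$ with $\sum_{i=1}^k |D_{R_i}| \geq (1-\epsilon)|\Omega|$. Because the vector potential $A$ is defined globally on $\mathbb{R}^2$, extending a function $u \in H_0^1(D_{R_i})$ by zero produces an admissible test function in $H_0^1(\Omega)$ with the same value of $a[u]$; combining such test functions from disjoint disks and applying the min-max principle yields the bracketing inequality $N(H_B^\Omega,\lambda) \geq \sum_{i=1}^k N(H_B^{D_{R_i}},\lambda)$ for every $\lambda > 0$. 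To count eigenvalues in each $D_{R_i}$ below $(1+\delta)B$, observe that of the branches $\lambda_{m,l}(B)$ only those with $m=1,\,l\leq 0$ have leading term equal to $B$; for those, Theorem \ref{thm:eigenvalue_asymptotics_disk} gives
\begin{align}
\frac{\lambda_{1,l}(B)}{B} - 1 \;=\; 2\alpha_i\cdot\frac{\alpha_i^{|l|}e^{-\alpha_i}}{|l|!}\,(1+O(B^{-1})), \qquad \alpha_i := \frac{BR_i^2}{2}. \nonumber
\end{align}
The right-hand side is $2\alpha_i$ times the Poisson mass at $|l|$ with parameter $\alpha_i$, and a standard Gaussian/Chernoff tail estimate shows that it is $\leq \delta$ for every $0\leq |l| \leq \alpha_i - C_\delta\sqrt{\alpha_i\log\alpha_i}$, producing $N(H_B^{D_{R_i}},(1+\delta)B) \geq (B|D_{R_i}|/2\pi)(1-o(1))$ as $B\to\infty$.

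Summing over $i$ and invoking the bracketing inequality yields $B_\eta>0$ such that for all $B\geq B_\eta$,
\begin{align}
n^* := N(H_B^\Omega,(1+\delta)B) \;\geq\; \frac{B(1-\epsilon)^2|\Omega|}{2\pi}. \nonumber
\end{align}
By definition $\lambda_{n^*}(\Omega,B) < (1+\delta)B$, while \eqref{eq:liyau_bound_ML} evaluated at $n^*$ and averaged over $k=1,\dots,n^*$ gives $\lambda_{n^*}(\Omega,B) \geq 2\pi n^*/|\Omega|$. Combining these with the choice of $\epsilon,\delta$,
\begin{align}
\frac{1}{2} \;\leq\; \lambda_{n^*}(\Omega,B)\left(\frac{4\pi n^*}{|\Omega|}\right)^{-1} \;\leq\; \frac{(1+\delta)B|\Omega|}{4\pi n^*} \;\leq\; \frac{1+\delta}{2(1-\epsilon)^2} \;\leq\; \frac{1}{2}+\eta, \nonumber
\end{align}
which is the desired conclusion.

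The main obstacle is the uniform lower bound $N(H_B^{D_R},(1+\delta)B) \geq (B|D_R|/2\pi)(1-o(1))$. Theorem \ref{thm:eigenvalue_asymptotics_disk} is stated for each $l$ held fixed as $B\to\infty$, whereas the counting step must handle $|l|$ growing with $B$ up to values of order $\alpha = BR^2/2$. I would address this by revisiting the proof of Theorem \ref{thm:eigenvalue_asymptotics_disk} to obtain a one-sided upper bound on $\lambda_{1,l}(B)/B-1$ whose remainder is controlled uniformly in $l$ throughout the relevant range, and then combining it with sharp Poisson tail bounds to recover the full phase-space density $B|D_R|/(2\pi)$.
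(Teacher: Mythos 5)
Your overall architecture (Dirichlet bracketing over a Vitali family of disjoint disks, counting the first spectral cluster on each disk, and a Li--Yau argument for the lower bound) is sound, and it is a legitimate variant of the paper's proof, which instead pins down the index of a single eigenvalue branch on each disk via Corollary \ref{cor:magnetic_disk_lambda_lowest} and then tracks indices through the disjoint union. But the step you flag yourself is a genuine gap, and it is exactly where the real work lies. Theorem \ref{thm:eigenvalue_asymptotics_disk} is an asymptotic statement for \emph{fixed} $m,l$: its $O(B^{-1})$ remainder depends on $l$, and its proof rests on the large-$z$ expansion \eqref{eq:magnetic_disk_2F2_asymptotic} of ${}_2F_2$ with the parameters $m+1$, $b+m$ held fixed. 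Your counting step needs control for $|l|$ up to a constant times $\alpha=BR^2/2$, i.e.\ for the Kummer parameter $b=|l|+1$ growing proportionally to the argument $z$; that is a different asymptotic regime of $M(a,b,z)$ (the transition/edge regime), in which neither the statement nor the proof of Theorem \ref{thm:eigenvalue_asymptotics_disk} yields the uniform Poisson-mass formula you write down. So ``revisiting the proof'' of that theorem does not produce the needed bound; a genuinely different estimate is required.

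That estimate is precisely what the paper supplies with Lemma \ref{lem:kummer_M_delta_eps}: for $0<\delta<1$ and $\varepsilon>0$ one has $M(-\delta,b,(1+\varepsilon)b)<0$ for all $b>b_{\delta,\varepsilon}$, hence by the intermediate value theorem $-\delta<a_1(b,(1+\varepsilon)b)<0$, and by monotonicity of $\lambda_{1,l}(B)/B$ in $B$ one gets $\lambda_{1,l}(B)\le(1+2\delta)B$ for all $B\ge \tfrac{2}{R^2}(1+\varepsilon)(|l|+1)$ with $|l|+1>b_{\delta,\varepsilon}$. Read at a fixed large $B$, this says that every branch with $b_{\delta,\varepsilon}<|l|+1\le \alpha/(1+\varepsilon)$ lies below $(1+2\delta)B$ --- exactly the one-sided, uniform-in-$l$ input your counting argument needs; note you do not need the sharp window $|l|\le\alpha-C_\delta\sqrt{\alpha\log\alpha}$, only a fraction $1/(1+\varepsilon)$ of the roughly $\alpha$ available angular momenta, so the two-sided tunneling asymptotics are overkill. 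With Lemma \ref{lem:kummer_M_delta_eps} substituted for your appeal to Theorem \ref{thm:eigenvalue_asymptotics_disk}, your bracketing/counting route closes and is arguably a little tidier for the passage from disks to general $\Omega$ than the index bookkeeping of Sections \ref{subsec:proof_of_thm_disjoint_disks}--\ref{subsec:proof_of_thm_general_domains}; without it, the proposal is incomplete at its central quantitative step.
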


\noindent Note that the lower bound is just \eqref{eq:lambda_n_bound_ML_known} which we do not reprove. We prove the upper bound to show sharpness of the excess factor. We do this first in the case of the domain being a disk by an explicit calculation. Then, we extend the result to disjoint unions of disks and finally to any open domain $\Omega$ of finite measure by a domain inclusion argument. 

As a consequence of the Theorem \ref{thm:magnetic_polya_general_set}, none of the excess factors $R_\gamma$, $\gamma\geq 0$, in \eqref{eq:riesz_means_bound_ML_known} can be improved for any fixed domain. 

\begin{corollary} \label{cor:magnetic_disk_trace}
Let $\Omega \subset \mathbb{R}^2$ be a domain of finite measure and $\gamma \geq 0$. Then for any $\eta > 0$, there exists some $B_\eta>0$ such that for any $B\geq B_\eta$ one finds some $\lambda>0$ with
\begin{align} \label{eq:cor_magnetic_disk_trace}
R_\gamma -\eta \leq \frac{\mathrm{tr} (H_B^\Omega - \lambda)_-^\gamma }{  L_{\gamma,2}^{\mathrm{cl}}  |\Omega| \lambda^{1+\gamma} }  \leq R_\gamma. 
\end{align}
\end{corollary}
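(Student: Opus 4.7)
The upper bound in \eqref{eq:cor_magnetic_disk_trace} is a direct restatement of \eqref{eq:riesz_means_bound_ML_known}, so the task is to exhibit, for arbitrary $\eta>0$ and $B$ sufficiently large, a value of $\lambda$ for which the ratio is at least $R_\gamma-\eta$. I would split the argument according to whether $\gamma\geq 1$ (where $R_\gamma=1$) or $\gamma\in[0,1)$ (where $R_\gamma>1$ is genuinely an excess factor).

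For $\gamma\geq 1$ I would invoke the magnetic Weyl law for Riesz means, which as noted in the introduction persists under the addition of a fixed magnetic vector potential:
\begin{align}
\mathrm{tr}(H_B^\Omega-\lambda)_-^\gamma = L_{\gamma,2}^{\mathrm{cl}}\,|\Omega|\,\lambda^{1+\gamma}(1+o(1)) \quad \text{as } \lambda\to+\infty,\ B \text{ fixed}. \nonumber
\end{align}
Any choice of $B_\eta>0$ therefore works, since for every $B\geq B_\eta$ one can select $\lambda=\lambda(B,\eta)$ large enough to push the ratio into $[1-\eta,1]$.

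For $\gamma\in[0,1)$ the plan is to combine Theorem \ref{thm:magnetic_polya_general_set} with a one-line monotonicity estimate. Given $\tilde\eta>0$ small, Theorem \ref{thm:magnetic_polya_general_set} supplies $B_{\tilde\eta}$ such that for every $B\geq B_{\tilde\eta}$ some eigenvalue $\lambda_n=\lambda_n(\Omega,B)$ satisfies $\lambda_n\leq(\tfrac{1}{2}+\tilde\eta)\cdot 4\pi n/|\Omega|$. Setting $\lambda:=(1+\gamma)\lambda_n$ (and, for $\gamma=0$, $\lambda:=\lambda_n+\delta$ with $\delta\downarrow 0$) and using $\lambda_k\leq\lambda_n<\lambda$ for all $k\leq n$ yields
\begin{align}
\mathrm{tr}(H_B^\Omega-\lambda)_-^\gamma \geq \sum_{k=1}^n(\lambda-\lambda_k)^\gamma \geq n(\lambda-\lambda_n)^\gamma = n(\gamma\lambda_n)^\gamma. \nonumber
\end{align}
Dividing by $L_{\gamma,2}^{\mathrm{cl}}|\Omega|\lambda^{1+\gamma}$ and inserting the upper bound on $\lambda_n$, a short computation gives
\begin{align}
\frac{\mathrm{tr}(H_B^\Omega-\lambda)_-^\gamma}{L_{\gamma,2}^{\mathrm{cl}}|\Omega|\lambda^{1+\gamma}} \geq \frac{(\gamma/(1+\gamma))^\gamma}{\tfrac{1}{2}+\tilde\eta} = \frac{R_\gamma}{1+2\tilde\eta}, \nonumber
\end{align}
so picking $\tilde\eta$ small (say $\tilde\eta\leq\eta/(2R_\gamma)$) and letting $B_\eta:=B_{\tilde\eta}$ finishes the case.

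The conceptual content of the argument is the choice $\lambda=(1+\gamma)\lambda_n$: it is exactly the maximizer of $y\mapsto(y-1)^\gamma/y^{1+\gamma}$ on $y>1$, and its maximum $\gamma^\gamma/(1+\gamma)^{1+\gamma}$ is the algebraic source of the formula $R_\gamma=2(\gamma/(1+\gamma))^\gamma$. In other words, the excess factor in the magnetic Berezin-type inequality \eqref{eq:riesz_means_bound_ML_known} is entirely produced by the single eigenvalue estimate of Theorem \ref{thm:magnetic_polya_general_set} together with the trivial monotone ordering $\lambda_1\leq\ldots\leq\lambda_n$, and no finer spectral information is needed. The only real obstacle is consequently Theorem \ref{thm:magnetic_polya_general_set} itself; the passage from there to Corollary \ref{cor:magnetic_disk_trace} is a clean reduction.
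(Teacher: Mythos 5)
Your proposal is correct, and for two of the three regimes it coincides with the paper: for $\gamma\geq 1$ both proofs simply invoke the magnetic Weyl asymptotics for Riesz means, and for $\gamma=0$ both take the eigenvalue $\lambda_n$ supplied by Theorem \ref{thm:magnetic_polya_general_set} and let $\lambda\searrow\lambda_n$. The genuine difference is in the range $0<\gamma<1$: the paper argues by contradiction, assuming a bound with some $R<R_\gamma$ held for all $\mu>0$ and transferring it down to the counting function via $\mathrm{tr}(H_B^\Omega-\lambda)_-^0\leq(\mu-\lambda)^{-\gamma}\mathrm{tr}(H_B^\Omega-\mu)_-^\gamma$ together with the optimization \eqref{eq:cor_proof_3} (both quoted from \cite{Frank2022}), which contradicts the already established $\gamma=0$ sharpness. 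You run the same inequality forwards instead: with $\lambda=(1+\gamma)\lambda_n$ — exactly the optimizer appearing in \eqref{eq:cor_proof_3} — you bound $\mathrm{tr}(H_B^\Omega-\lambda)_-^\gamma\geq n(\gamma\lambda_n)^\gamma$ and compute the ratio directly; your constant bookkeeping (ratio $\geq R_\gamma/(1+2\tilde\eta)\geq R_\gamma-\eta$ for $\tilde\eta\leq\eta/(2R_\gamma)$) is accurate, and the needed upper bound is, as you say, just \eqref{eq:riesz_means_bound_ML_known}. So the two arguments rest on the same elementary inequality and the same optimal spectral parameter; yours is the direct, self-contained rearrangement, which treats $\gamma=0$ and $0<\gamma<1$ uniformly and exhibits explicitly the value of $\lambda$ at which the ratio is nearly saturated, while the paper's contradiction formulation lets it cite the lemma from \cite{Frank2022} verbatim and derive the case $0<\gamma<1$ as a consequence of the $\gamma=0$ case. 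I see no gap in your argument.
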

\noindent This is Frank's sharpness result \cite[Theorem 3.6]{Frank2009b} that was already mentioned in the introduction. The proof of the corollary will be given right after the proof of Theorem \ref{thm:magnetic_polya_general_set} in Section \ref{sec:proof_thm2_2}. 

Figure \ref{fig:polya} illustrates Theorem \ref{thm:magnetic_polya_general_set} for the disk. It shows the normalized eigenvalues $\lambda_n(\Omega, B)(4\pi n/|\Omega|)^{-1}$, i.e.\ the eigenvalues normalized by the right-hand-side in P\'olya's inequality resp. Weyl's law. One observes that once the field strength surpasses a critical value, the normalized eigenvalues start to drop below one, indicated by a black, horizontal line. This is exactly  when the sorted eigenvalues $\lambda_n(\Omega, B)$ fall below the non-magnetic P\'olya bound of $4\pi n/|\Omega|$ and the non-magnetic P\'olya bound is violated. Theorem \ref{thm:magnetic_polya_general_set} asserts that any domain $\Omega$ features this kind of behavior and that 
\begin{align} \label{eq:lim_polya_excess_factor}
\lim_{B\rightarrow \infty} \; \inf_{n\in\mathbb{N}} \lambda_n(\Omega, B) \left(\frac{4\pi n}{|\Omega|} \right)^{-1} = \frac{1}{2}. 
\end{align}
Graphically, this means that the lower envelope of the curves in Figure \ref{fig:polya} converges to $1/2$ as $B\rightarrow \infty$. 

With some more work, our proof of Theorem \ref{thm:magnetic_polya_general_set} allows an estimate on the convergence speed of \eqref{eq:lim_polya_excess_factor} for the disk. For any $\rho < 1/2$, it is possible to show an upper bound of the form
\begin{align} \label{eq:polya_excess_factor_landau}
\inf_{n\in\mathbb{N}} \lambda_n(D_R, B) \left(\frac{4\pi n}{|D_R|} \right)^{-1} \leq \frac{1}{2} + C_R B^{-\rho} 
\end{align}
for $B$ large enough, where $C_R $ solely depends on $R$. We do not give a proof for \eqref{eq:polya_excess_factor_landau} in this article, but we plan to provide such improved convergence results in a future article.

\section{Proof of Theorem \ref{thm:eigenvalue_asymptotics_disk}} \label{sec:proof_thm2_1}

For completeness, we begin with a proof of the fact that
\begin{align}
\lim_{B \rightarrow +\infty} \frac{\lambda_{m,l}(B)}{B} = l+ |l|+1 + 2(m-1).  \nonumber
\end{align}
After that, we compute the asymptotic expression of the remainder term. 

Equation \eqref{eq:magnetic_disk_eigval_transcendental} relates eigenvalues for the magnetic Dirichlet Laplacian on the disk with roots of Kummer's function. Statements about roots of $a\mapsto M(a,b,z)$ directly translate to properties of eigenvalues. We must therefore closely analyze Kummer's function and its roots. The arguments of Kummer's function in \eqref{eq:magnetic_disk_eigval_transcendental} are always real-valued, so the arguments $a,b$ and $z$ of Kummer's function will be considered real numbers in the following. 

The existence of a limit of $\lambda_{m,l}(B)/B$ can be established by monotone convergence. For $\lambda \leq (l+ |l|+1) B$ we have
\begin{align}
\frac{1}{2} \left( l+ |l|+1- \frac{ \lambda}{B}    \right)\geq 0  \nonumber
\end{align}
and thus 
\begin{align}
&M \left(\frac{1}{2} \left( l+ |l|+1- \frac{ \lambda}{B}    \right),|l|+1, \frac{BR^2}{2}  \right) \\
=\; & 1 + \sum\limits_{k=1}^\infty \frac{\left(\frac{1}{2} \left( l+ |l|+1- \frac{ \lambda}{B}    \right) \right)_k}{(|l|+1)_k k!} \left( \frac{BR^2}{2}  \right)^k \geq 1,   \nonumber
\end{align}
as all summands in the series are non-negative. Therefore, \eqref{eq:magnetic_disk_eigval_transcendental} cannot be satisfied and $\lambda$ cannot be an eigenvalue. This means that any eigenvalue $\lambda_{m,l}(B)$ must fulfill
\begin{align}
\frac{\lambda_{m,l}(B)}{B}> l+ |l|+1.
\end{align}
Note that the roots $a_m(b,z)$ of $a \mapsto M(a,b,z)$ are strictly increasing with $z>0$ for any fixed $b>0$ and $m\in\mathbb{N}$, see e.g.\ \cite[\S 17]{Buchholz1953}. According to \eqref{eq:eigenvalue_lambda_a_m_relation}, this implies that all maps $B \mapsto \lambda_{m,l}(B)/B$ are strictly decreasing. By monotone convergence, the limit $\lim_{B \rightarrow +\infty} \lambda_{m,l}(B)/B$ exists and is bounded from below by $(l+ |l|+1)$. 

Let us now determine this limit. Let $b=|l|+1>0$ be fixed. For any $m\in \mathbb{N}_0$ we have 
\begin{align}
M(-m,b,z) = \sum\limits_{k=0}^\infty \frac{(-m)_k}{(b)_k k!} z^k = \sum\limits_{k=0}^{m} \frac{(-m)_k}{(b)_k k!} z^k, 
\end{align}
since $(-m)_k = 0$ for $k>m$. The map $z \mapsto M(-m,b,z)$ is a (scaled Laguerre) polynomial in $z$ of degree $m$ and for real $z$ we have
\begin{align}
M(-m,b,z) = \frac{(-m)_m}{(b)_m m!} z^m (1+O(z^{-1})) =\frac{(-1)^m}{(b)_m} z^m (1+O(z^{-1})) \qquad \text{as } z\rightarrow +\infty.
\end{align}
Thus, if $z^*>0$ is chosen large enough, one can enforce that $\text{sgn}(M(-m,b,z)) = (-1)^m$ for all $z>z^*$. Let now $m' \in\mathbb{N}$. One can choose $z^*>0$ large enough such that $\text{sgn}(M(-m,b,z^*)) = (-1)^m$ for all integers $m$ with $0 \leq m\leq m'$. As $a \mapsto M(a,b,z)$ is continuous for fixed $z>z^*$, it must have at least one root in each of the intervals $(-m,-(m-1))$, $1 \leq m\leq m'$, by the intermediate value theorem. However, it is actually true that each of these intervals contains exactly one root. This is a consequence of an intimate connection between roots of $a\mapsto M(a,b,z)$ and roots of $z \mapsto M(a,b,z)$ which is the content of the next lemma. We will see that $a_m(b,z)$ must be bounded from above by $-(m-1)$.

To state the lemma, let us introduce some new notation. First, since $b$ is fixed, we suppress the parameter $b$ in the following and just write $a_m(z)$ for the negative roots of $a\mapsto M(a,b,z)$ instead of $a_m(b,z)$. Recall that for every fixed $z>0$ the roots $a_m(z)$ are sorted in decreasing order. The function $ z \mapsto M(a,b,z)$ for $a<0$, $b> 0$, has exactly $\lceil -a \rceil$ real, positive roots, see \cite[\S 9]{Tricomi1955}, \cite[\S 17]{Buchholz1953} or \cite[\href{https://dlmf.nist.gov/13.9}{Section 13.9}]{NIST_DLMF}. Here, $\lceil \, . \, \rceil$ denotes the ceiling function. We sort the roots in increasing order and denote the $m$-th root by $z_m(a)$. As an illustration, Figure \ref{fig:kummerM_a_m_z_m} shows the first few roots $a_m(z)$ and $z_m(a)$ for the case $b=1$. We will now proceed to prove the following lemma.

\begin{lemma} \label{lem:am_zm_roots}
Let $a_m(z)$, $z_m(a)$ be defined as above. Then for all $m\in\mathbb{N}$
\begin{align}
a_m(z)<-(m-1) \qquad \text{for all } z \in (0, \infty).  \nonumber
\end{align}
Moreover, $a_m: (0, \infty) \rightarrow (-\infty, -(m-1))$ is invertible and its inverse is given by $z_m: (-\infty, -(m-1)) \rightarrow (0, \infty)$.
\end{lemma}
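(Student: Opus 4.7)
My plan is to recognize that the curves $\{(a_m(z), z) : z > 0\}$ and $\{(a, z_m(a)) : a < -(m-1)\}$ are two parametrizations of the same connected component of the zero set
\[
Z = \{(a,z) \in (-\infty,0)\times(0,\infty) : M(a,b,z)=0\},
\]
with matching indices. Since all negative roots of $a\mapsto M(a,b,z)$ (for fixed $z>0$) and all positive roots of $z\mapsto M(a,b,z)$ (for fixed $a<0$) are simple, both partial derivatives $\partial_a M$ and $\partial_z M$ are non-zero on $Z$. By the implicit function theorem, $Z$ is a smooth one-dimensional submanifold of the plane, and every connected component admits both a $z$-parametrization $z\mapsto a_k(z)$ and an $a$-parametrization $a\mapsto z_k(a)$ that are smooth and strictly monotonic on each branch.

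First I would study the map $z_m$ carefully. It is defined exactly when $\lceil -a\rceil \geq m$, i.e.\ on $(-\infty,-(m-1))$. At $a=-(m-1)$ the function $M(-(m-1),b,z)$ reduces to a scaled Laguerre polynomial of degree $m-1$ with exactly $m-1$ positive roots. By continuity of simple roots on compact subsets of $(0,\infty)$, the roots $z_1(a),\ldots,z_{m-1}(a)$ converge to these $m-1$ finite values as $a\uparrow -(m-1)$, so the extra root $z_m(a)$ must diverge to $+\infty$. At the other end, the standard Bessel-function approximation $M(a,b,z)\approx \Gamma(b)(-az)^{(1-b)/2}J_{b-1}(2\sqrt{-az})$, valid for $-a$ large with $-az$ bounded, places the $m$-th positive root near $y_m^2/(-4a)$, where $0<y_1<y_2<\cdots$ are the zeros of $J_{b-1}$, giving $z_m(a)\to 0^+$ as $a\to -\infty$. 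The implicit function theorem guarantees that $z_m$ is smooth with non-vanishing derivative, hence locally strictly monotonic on its domain; the boundary values then force $z_m$ to be a strictly increasing bijection from $(-\infty,-(m-1))$ onto $(0,\infty)$.

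Next I would identify $z_m^{-1}$ with $a_m$. The graph $\{(a,z_m(a)) : a<-(m-1)\}$ is a full connected component of $Z$, so for each $z_0>0$ the point $(z_m^{-1}(z_0),z_0)$ lies on $Z$ and therefore equals $(a_k(z_0),z_0)$ for some $k$. Because distinct connected components of $Z$ are disjoint smooth curves in the plane and cannot cross, their vertical ordering along any line $\{z=z_0\}$ is independent of $z_0$. It thus suffices to verify the ordering in the asymptotic regime $z_0\to\infty$: the sign analysis preceding the lemma shows that for sufficiently large $z_0$ the roots of $a\mapsto M(a,b,z_0)$ arrange themselves with exactly one root in each interval $(-k,-(k-1))$, $k=1,2,\ldots$, so $a_k(z_0)\in(-k,-(k-1))$ and in particular $a_k(z_0)\to -(k-1)^-$ as $z_0\to\infty$. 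Matching this asymptote with the $a\uparrow -(m-1)$ end of the $z_m$-curve pins down $k=m$. Consequently $z_m^{-1}=a_m$, which in particular delivers the bound $a_m(z)<-(m-1)$ for all $z>0$.

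The hard part will be this identification step, and in particular the uniqueness of the root $a_k(z)$ in each interval $(-k,-(k-1))$ for large $z$. This uniqueness follows from the connected-component structure: having two components asymptotic to the same vertical line $a=-(k-1)$ would force them to cross at some finite $z$, contradicting the simplicity of the zeros of $M$ on $Z$. The remaining ingredients (continuity of roots at $a=-(m-1)$ and the Bessel approximation as $a\to-\infty$) are classical properties of Kummer's function that can be cited from the references already used in the paper.
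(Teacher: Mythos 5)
Your overall strategy (view the zero set $Z=\{M(a,b,z)=0\}$ as a smooth one-dimensional manifold, show each $z_m$ is a strictly increasing bijection of $(-\infty,-(m-1))$ onto $(0,\infty)$, and then identify its inverse with $a_m$ by an ordering argument) is viable and genuinely different from the paper's proof, which establishes the index matching $a_m(z)=z\text{-root}_m$ by an elementary induction on $m$ using only the sortings and strict monotonicity. However, the step you yourself single out as the hard part fails as justified. You need that for large $z_0$ there is \emph{exactly} one root of $a\mapsto M(a,b,z_0)$ in each interval $(-k,-(k-1))$, and you argue that two components asymptotic to the same vertical line $a=-(k-1)$ would be forced to cross. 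That is a non sequitur: two disjoint graphs $f(z)<g(z)<-(k-1)$ over $(z^*,\infty)$ can both tend to $-(k-1)$ without ever meeting, and a second root in the interval need not even lie on a component with that asymptote. Non-crossing preserves the vertical ordering of components but does not exclude a shared asymptote. Note also that the paper states explicitly that the sign analysis only yields \emph{at least} one root per interval and that ``exactly one'' is a consequence of the lemma, so your argument leans on (a near-relative of) the conclusion precisely where the real work lies. A further minor point: even granting one root per interval, $a_k(z_0)\to-(k-1)$ does not follow without invoking the monotonicity of $a_k$ in $z$, and your matching as written only pins $m\in\{k,k+1\}$ unless that monotonicity is used.

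The gap is repairable with the tools you already set up, bypassing the interval-uniqueness claim entirely: every point of $Z$ lies on the graph of exactly one $z_k$ (since for fixed $a<0$ the positive roots are exactly $z_1(a),\dots,z_{\lceil -a\rceil}(a)$), each such graph meets the line $z=z_0$ exactly once by the bijectivity you established, and for $j<j'$ one has $z_j\bigl(z_{j'}^{-1}(z_0)\bigr)<z_{j'}\bigl(z_{j'}^{-1}(z_0)\bigr)=z_0$, hence $z_{j'}^{-1}(z_0)<z_j^{-1}(z_0)$. Thus the roots of $a\mapsto M(a,b,z_0)$ exceeding $z_m^{-1}(z_0)$ are precisely $z_1^{-1}(z_0),\dots,z_{m-1}^{-1}(z_0)$, so $z_m^{-1}(z_0)=a_m(z_0)$ for every $z_0>0$, which gives both the identification and $a_m(z)<-(m-1)$. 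Be aware, finally, that your boundary analysis of $z_m$ (divergence as $a\uparrow-(m-1)$ via root continuity and simplicity, and $z_m(a)\to0^+$ via the Bessel-type asymptotics of Kummer zeros) imports nontrivial classical facts that must be cited or argued carefully; the paper's induction avoids them altogether.
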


The lemma not only states that $a_m(z)$ can be inverted by some $z_{k}(a)$ but that the index $k$ is always equal to $m$. Knowing this, the upper bound on the root $a_m(z)$ is owed to the fact that there are only $\lceil -a \rceil$ real, positive roots $z_m(a)$ for fixed $a$. 

Lemma \ref{lem:am_zm_roots} appeared as part of Proposition 4.3.12 in the thesis of Son \cite{Son2014}. The importance of the lemma is two-fold. For one, it guarantees the upper bound $a_m(z)<-(m-1)$ that we need in this section. Secondly, we will make use of the characterization of the inverse of $a_m(z)$ in the proof of Theorem \ref{thm:magnetic_polya_general_set}, see Corollary \ref{cor:magnetic_disk_lambda_explicit}. For convenience of the reader, we give a self-contained proof of the lemma. 

\begin{proof}
First, note that all roots $a_m(z)$ and $z_m(a)$, $m\in\mathbb{N}$, are simple. For the roots $a_m(z)$, we already mentioned this fact before. The roots $z_m(a)$ are simple as well. This is because, if one of them were a multiple root, repeated use of Kummer's equation would imply that this particular root is of infinite order. But viewed as a complex function, Kummer's function is entire in $z$. A root of infinite order would imply that Kummer's function is identical to the zero function which is a clear contradiction.

\begin{figure}
\centering
\begin{center}
\includegraphics[scale=0.35,trim={0.0cm 0.5cm 1.5cm 1.5cm},clip]{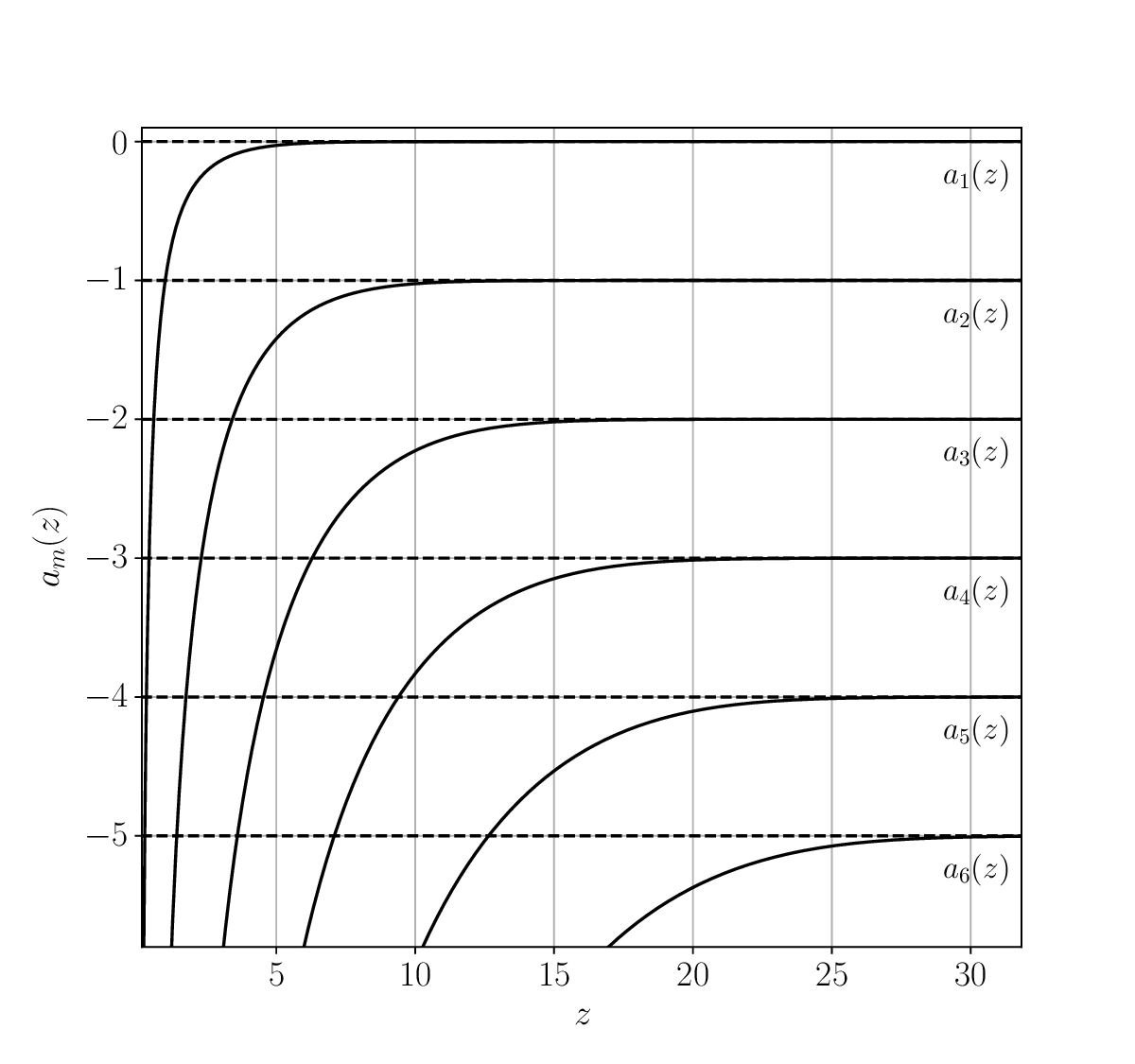}
\includegraphics[scale=0.35,trim={0.5cm 0.5cm 1.5cm 1.5cm},clip]{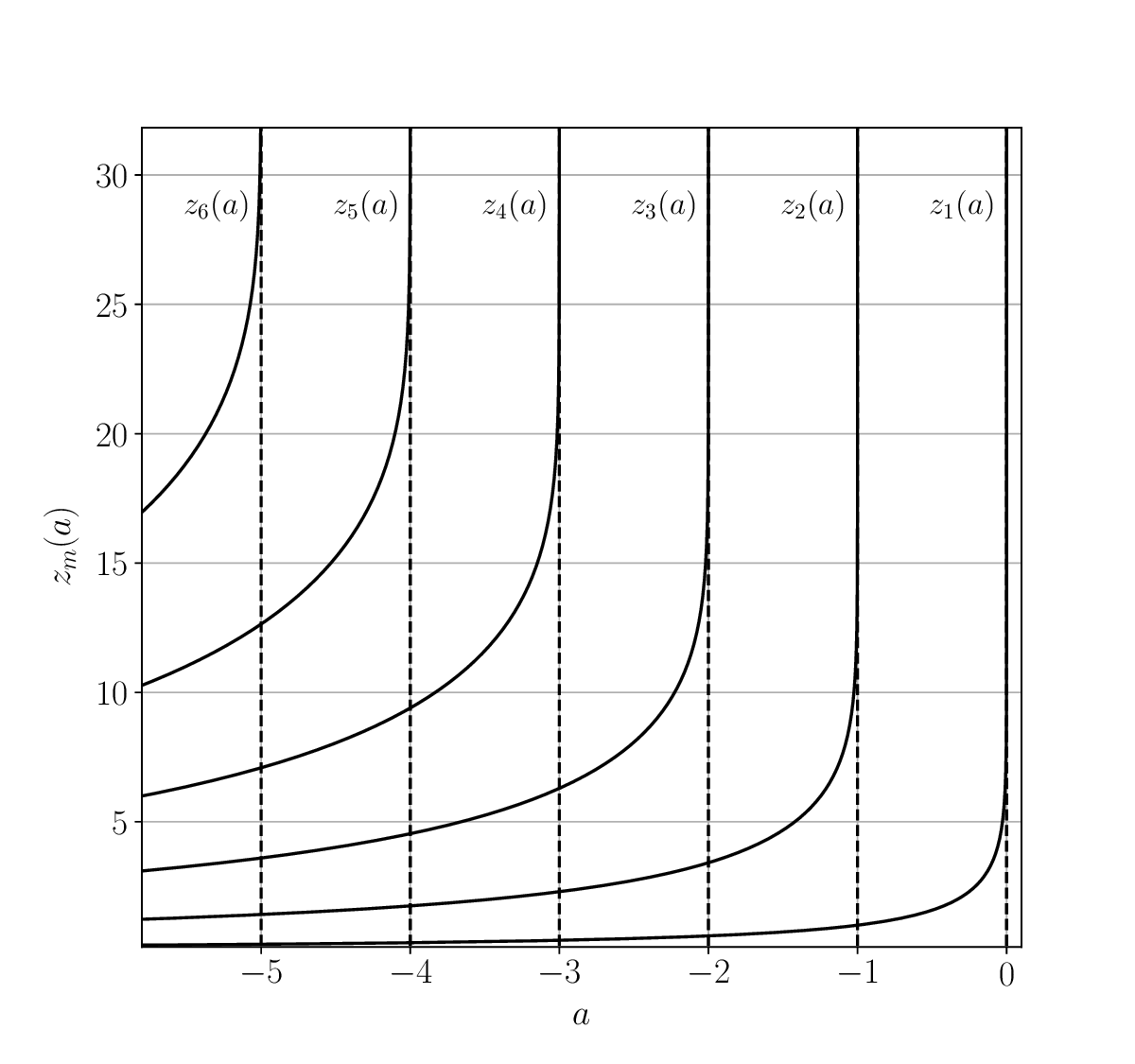}
\end{center}
\caption{The first few roots $a_m(z)$ and $z_m(a)$ for $b=1$.} \label{fig:kummerM_a_m_z_m}
\end{figure}

For fixed $a<0$ there are exactly $\lceil -a \rceil$ roots $z_m(a)$, which means that the $m$-th root $z_m(a)$ only exists for $a\in (-\infty, -(m-1))$. By the implicit function theorem, all maps $a_m$ and $z_m$, defined on $(0,\infty)$ resp. $(-\infty, -(m-1))$ are smooth functions. For now, we only know that the image of $a_m$ is contained in $(-\infty, 0)$, because we do not have any constraints yet where the roots $a_m(z)$ lie other than that they are negative. We know $a_m$ is injective, since $a_m(z)$ is strictly increasing with $z$. We need to show that the image of $a_m$ is $(-\infty, -(m-1))$ and inverted by $z_m$. 

Let $a \in (-\infty, 0)$, $z\in(0, \infty)$. Note that $a$ is a root of $a'\mapsto M(a',b,z)$ if and only if $z$ is a root of $z'\mapsto M(a,b,z')$. This means that $a=a_m(z)$ only if $z=z_{k}(a)$ for some $k\in\mathbb{N}$, $1\leq k \leq \lceil -a \rceil$, and conversely $z=z_{k}(a)$ only if $a=a_m(z)$ for some $m\in\mathbb{N}$. We wrote $k$ because we do not know yet what the connection between $m$ and $k$ is. We now show that $m=k$ always holds. In other words, we show that
\begin{align} 
\text{if } a \in [ -(m-1),0), \qquad &\text{then} \qquad a=a_m(z) \text{ has no solution } z\in(0,\infty)  \nonumber \\
\text{and if }  a \in (-\infty, -(m-1)), \qquad &\text{then} \qquad a=a_m(z) \text{ if and only if } z=z_{m}(a).  \nonumber
\end{align}
 Once this is shown, we have that the image of $a_m$ is contained in $(-\infty, -(m-1))$ and $z_m:(-\infty, -(m-1)) \rightarrow (0, \infty)$ is the inverse of $a_m: (0, \infty) \rightarrow (-\infty, -(m-1))$ which is the assertion of the lemma.

We carry out an induction over $m$. For the start of the induction, let $a\in(-\infty, 0)$ be arbitrary and suppose $a=a_1(z_{k}(a))$ for some integer $1< k \leq \lceil -a \rceil$. There must exist some integer $m \in\mathbb{N}$ such that $a=a_{m}(z_1(a))$. But then the sortings $a_1(z)>a_2(z)> ...$ and $z_1(a)<z_2(a)< ...$ and the strict monotonicity of $a_1(z)$ with respect to $z$ yields the contradiction
\begin{align}
a= a_{m}(z_1(a)) \leq a_1(z_1(a))< a_1(z_{k }(a))=a.  \nonumber
\end{align}
Therefore, for any $a\in(-\infty, 0)$, $a=a_1(z)$ if and only if $z = z_1(a)$.

Let us now assume that there exists an $m'\in\mathbb{N}$ such that for all $m$ with $1\leq m \leq m'$ we have 
\begin{align} 
\text{if } a \in [ -(m-1),0), \qquad &\text{then} \qquad a=a_m(z) \text{ has no solution } z\in(0,\infty)  \nonumber \\
\text{and if }  a \in (-\infty, -(m-1)), \qquad &\text{then} \qquad a=a_m(z) \text{ if and only if } z=z_{m}(a).  \nonumber
\end{align}
We need to show that 
\begin{align} 
\text{if } a \in [ -(({m'+1})-1),0), \quad &\text{then} \quad a=a_{m'+1}(z) \text{ has no solution } z\in(0,\infty)  \nonumber \\
\text{and if } a\in(-\infty, -(({m'+1})-1)), \quad &\text{then} \quad a=a_{m'+1}(z) \text{ if and only if } z=z_{m'+1}(a).  \nonumber 
\end{align}

Let $a\in(-\infty, 0)$ be arbitrary and assume that we have a $z\in(0,\infty)$ with $a = a_{m'+1}(z)$. We know that $z=z_{k}(a)$ for some integer $k$ satisfying $1\leq k \leq \lceil -a \rceil$. First of all, if $k\leq m'$, then by induction assumption $a = a_{k}(z_{k}(a))$. But this leads to the contradiction
\begin{align}
a = a_{m'+1}(z_{k}(a)) < a_{k}(z_{k}(a)) = a,   \nonumber
\end{align}
because $k < m'+1 $. Thus, we must have $k\geq  m' + 1$. 

We can now discuss the solvability of $a=a_{m'+1}(z)$ case-by-case.

\textit{Case 1:} If $a \in [ -(({m'+1})-1),0) = [-m',0)$, then $1\leq k \leq \lceil -a \rceil$ implies $k\leq m'$ which contradicts $k\geq  m' + 1$. Therefore, there is no solution $z \in (0,\infty) $ to $a=a_{m'+1}(z)$.

\textit{Case 2:} Consider $a\in(-\infty, -(({m'+1})-1)) = (-\infty, -m')$. 

\; \textit{Case 2.1:} If $a\in[-m'-1, -m')$, then $m'+1 \leq k\leq \lceil -a \rceil$ implies $k=m'+1$. Thus, $a=a_{m'+1}(z)$ if and only if $z=z_{m'+1}(a)$. 

\; \textit{Case 2.2:} If $a\in(-\infty, -m'-1)$, then more than $m'+1$ roots in $z$ exist, namely $z_1(a)$, ..., $z_{\lceil -a \rceil}(a)$, so that $k$ could be larger than $m'+1$. 
Suppose $k> m'+1$. There must exist some integer $m \in \mathbb{N}$ such that $a=a_{m}(z_{m'+1}(a))$. If $m < m' +1$, then by the induction assumption $a=a_{m}(z_{m}(a))$ which leads to the contradiction
$$a=a_{m}(z_{m}(a)) < a_{m}(z_{m'+1}(a))= a.$$
If $m\geq  m' +1$, then 
\begin{align}
a = a_{m}(z_{m'+1}(a)) \leq a_{m'+1}(z_{m'+1}(a)) < a_{m'+1}(z_{k}(a)) = a  \nonumber
\end{align}
which is again a contradiction. Hence, $k = m'+1$ and for all $a\in(-\infty, -(({m'+1})-1))$ holds $a=a_{m'+1}(z)$ only if $z=z_{m'+1}(a)$. This concludes the induction and the proof.
\end{proof}

We return to the proof of Theorem \ref{thm:eigenvalue_asymptotics_disk}. By Lemma \ref{lem:am_zm_roots} we have $a_m(z)<-(m-1)$ for all $z \in (0, \infty)$. Since $a_m:(0, \infty) \rightarrow (-\infty, -(m-1)) $ is strictly increasing and bijective, we must have $a_m(z) \rightarrow -(m-1) $ for $z \rightarrow +\infty$. Together with \eqref{eq:eigenvalue_lambda_a_m_relation}, these two facts show that 
\begin{align}
\frac{ \lambda_{m,l}(B)}{B} >  l+ |l|+1 + 2(m-1) \qquad \text{for all } B>0  \nonumber
\end{align}
and
\begin{align}
\lim_{B \rightarrow +\infty} \frac{\lambda_{m,l}(B)}{B} = l+ |l|+1 + 2(m-1).  \nonumber
\end{align}
The lower bound is the first statement of Theorem \ref{thm:eigenvalue_asymptotics_disk}.

We can now discuss the asymptotic behaviour of the remainder $ \lambda_{m,l}(B)/B - (l+ |l|+1 + 2(m-1))$. For this, we fix $b=|l|+1$ and set $\varepsilon_m(z) := -(m-1) - a_m(z)$ where - as previously - $a_m(z)$ is the $m$-th negative root of $a\mapsto M(a,b,z)$ for $z> 0$ and fixed $b>0$. By Lemma \ref{lem:am_zm_roots}, $\varepsilon_m(z)$ is always positive. Since $a_m(z) \rightarrow -(m-1) $ for $z \rightarrow +\infty$, we know that $\varepsilon_m(z) = o(1)$ for $z \rightarrow +\infty$. We look for an asymptotic expression of $\varepsilon_m(z)$ as $z\rightarrow +\infty$, since this translates directly into an asymptotic expression for the remainder $ \lambda_{m,l}(B)/B - (l+ |l|+1 + 2(m-1))$ by \eqref{eq:eigenvalue_lambda_a_m_relation}. 

First, we split the series that defines $M(a_m(z), b, z)$ into two parts. If
\begin{align}
p_m(z) &:= \sum\limits_{k=0}^{m-1} \frac{(-(m-1)-\varepsilon_m(z))_k}{(b)_k k!} z^k , \nonumber \\
q_m(z) &:= \sum\limits_{k=m}^{\infty} \frac{(-(m-1)-\varepsilon_m(z))_k}{(b)_k k!} z^k, \nonumber
\end{align}
then
\begin{align} 
p_m(z) + q_m(z) = M(a_m(z), b, z) = 0. \nonumber
\end{align}
The reason why splitting the series at $k=m$ is helpful to gain an asymptotic expression for $\varepsilon_m(z)$ is that $q_m(z)$ contains all terms that have $\varepsilon_m(z)$ as a factor appearing in the Pochhammer symbol in the numerator. We can factor out $\varepsilon_m(z)$ from $q_m(z)$ and write $q_m(z) = \varepsilon_m(z) \tilde{q}_{m}(z)$. Then,
\begin{align}
\varepsilon_m(z) = - \frac{p_m(z)}{\tilde{q}_{m}(z)}  = \left| \frac{p_m(z)}{\tilde{q}_{m}(z)}\right| \nonumber
\end{align}
and the asymptotic behavior of $\varepsilon_m(z) $ is determined by $p_m(z)$ and $\tilde{q}_{m}(z)$. As we will see, both $|p_m(z)|$ and $|\tilde{q}_{m}(z)|$ tend to infinity as $z\rightarrow +\infty$, but at different speed. It remains to investigate their asymptotic behavior as $z\rightarrow +\infty$.

Let us first analyze the asymptotic behavior of $p_m(z)$. For $c \in \mathbb{R}$ and $k \in \mathbb{N}_0$ the function $x \mapsto (c+x)_k $ is a polynomial in $x$ and $(c+x)_k = (c)_k + O(x)$ as $x \rightarrow 0$. Now, because we already know that $\varepsilon_m(z) \rightarrow 0 $ as $z \rightarrow +\infty$, this yields that $(-(m-1)-\varepsilon_m(z))_k = (-(m-1))_k (1+O(\varepsilon_m(z))) = (-(m-1))_k (1+o(1)) $ as $z \rightarrow +\infty$ for any fixed $k \in \mathbb{N}_0$ and $m \in\mathbb{N}$. Therefore, $p_m(z)$ is asymptotically dominated by the term with $k=m-1$, or more precisely
\begin{align} \label{eq:pm_asymptotic_simple}
p_m(z) = \frac{(-(m-1))_{m-1}}{(b)_{m-1} (m-1)!} z^{m-1} (1 + o(1))  = \frac{(-1)^{m-1}}{(b)_{m-1}} z^{m-1} (1 + o(1)) \qquad \text{as } z\rightarrow +\infty.
\end{align}

Let us now take a closer look at $q_m(z)$. Using $(b)_{i+j}=(b)_i (b+i)_j$ for $b \in\mathbb{R}$ and $i,j \in\mathbb{N}_0$ and $n! = (1)_{n}$ for $n \in\mathbb{N}_0$ yields 
\begin{align}
q_m(z)&=\sum\limits_{k=m}^{\infty} \frac{(-(m-1)-\varepsilon_m(z))_k}{(b)_k k!} z^k \nonumber \\
&= (-(m-1)-\varepsilon_m(z))_{m} z^m \sum\limits_{k=m}^{\infty} \frac{(1-\varepsilon_m(z))_{k-m}}{(b)_k k!} z^{k-m} \nonumber \\
&= (-1)^m(1+\varepsilon_m(z))_{m-1} \varepsilon_m(z) z^m \sum\limits_{k=0}^{\infty} \frac{(1-\varepsilon_m(z))_{k}}{(b)_{k+m} (k+m)!} z^{k} \nonumber \\
&=(-1)^m(1+\varepsilon_m(z))_{m-1} \varepsilon_m(z) z^m \sum\limits_{k=0}^{\infty} \frac{(1)_k (1-\varepsilon_m(z))_{k}}{(b)_m (b+m)_k (1)_m (1+m)_k} \frac{z^{k}}{k!} \nonumber \\
&= \varepsilon_m(z) \tilde{q}_m(z). \nonumber
\end{align}
where 
\begin{align}
\tilde{q}_m(z):= (-1)^m(1+\varepsilon_m(z))_{m-1} z^m \sum\limits_{k=0}^{\infty} \frac{(1)_k (1-\varepsilon_m(z))_{k}}{(b)_m (b+m)_k (1)_m (1+m)_k} \frac{z^{k}}{k!}. \nonumber
\end{align}
Note, that if $z$ is large enough, then $\varepsilon_m(z)<1$, the summands in the series are positive and $\tilde{q}_m(z)$ has indeed the opposite sign of $p_m(z)$ as expected. We can rewrite $\tilde{q}_{m}(z)$ as
\begin{align}
\tilde{q}_m(z)= \frac{ (-1)^m(1+\varepsilon_m(z))_{m-1} z^m }{(1)_m (b)_m} \, {}_2F_2(1, 1 - \varepsilon_m(z); m+1, b+m;z) \nonumber
\end{align}
where 
\begin{align}
{}_2F_2(a, b; c, d;z) = \sum\limits_{k=0}^{\infty} \frac{(a)_k (b)_{k}}{(c)_k (d)_k} \frac{z^{k}}{k!}, \qquad c,d \neq 0,-1,-2,... \nonumber
\end{align}
is a generalized hypergeometric function, see for example \cite[Volume I, Chapter IV]{Bateman1953} and \cite[Chapter 3]{Luke1969}. For given parameters $a,b,c,d$, the generalized hypergeometric function ${}_2F_2$ is an entire function in $z$ and is increasing in $a,b$ for $a,b>0$. For fixed $a,b,c,d$ we have
\begin{align}
{}_2F_2(a, b; c, d;z) = \frac{\Gamma(c)\Gamma(d)}{\Gamma(a)\Gamma(b)} z^{a+b -c -d}e^z (1+O(z^{-1}))  \qquad \text{as } z\rightarrow +\infty, \label{eq:magnetic_disk_2F2_asymptotic}
\end{align}
see \cite[Chapter 5.11.3]{Luke1969} for reference. Let now $0<\varepsilon < 1$ be arbitrary and let $z^*$ be large enough such that $\varepsilon_m(z) < \varepsilon$ for all $z>z^*$. Then $(1-\varepsilon)_k < (1-\varepsilon_m(z))_k$ for all $k\in\mathbb{N}$ and all $z>z^*$, therefore 
\begin{align}
{}_2F_2(1, 1 - \varepsilon_m(z); m+1, b+m;z) > {}_2F_2(1, 1 - \varepsilon; m+1, b+m;z)>0 \nonumber
\end{align}
for all $z>z^*$ and
\begin{align}
 |\tilde{q}_m(z)| &= \frac{ (1+\varepsilon_m(z))_{m-1} z^m }{(1)_m (b)_m} \, {}_2F_2(1, 1 - \varepsilon_m(z); m+1, b+m;z) \nonumber \\
 &> \frac{ (1)_{m-1} z^m }{(1)_m (b)_m} \, {}_2F_2(1, 1 - \varepsilon; m+1, b+m;z) \nonumber \\
 &= \frac{ z^m }{m (b)_m} \frac{\Gamma(m+1)\Gamma(b+m)}{\Gamma(1-\varepsilon)} z^{1+1 - \varepsilon -m-1 -b-m}e^z (1+O(z^{-1}))  \nonumber \\
  &= \frac{\Gamma(m) \Gamma(b+m)}{ (b)_m\Gamma(1-\varepsilon)}    z^{1 - \varepsilon -b-m}e^z (1+O(z^{-1}))  \qquad \text{as } z\rightarrow +\infty. \nonumber
 \end{align}
Combining this with \eqref{eq:pm_asymptotic_simple}, we get
\begin{align}
\varepsilon_m(z) = \left| \frac{p_m(z)}{\tilde{q}_{m}(z)} \right| &\leq  \frac{ (b)_m \Gamma(1-\varepsilon) }{(b)_{m-1}\Gamma(m) \Gamma(b+m)}   z^{b+2(m-1)+ \varepsilon }e^{-z} (1+o(1))  \nonumber \\
&=  \frac{  \Gamma(1-\varepsilon)}{\Gamma(m) \Gamma(b+m-1)}    z^{b+2(m-1)+ \varepsilon }e^{-z} (1+o(1))  \qquad \text{as } z\rightarrow +\infty. \nonumber
\end{align}
and therefore for any $\varepsilon>0$
\begin{align} \label{eq:epsm_asymptotic_simple}
\varepsilon_m(z) = O( z^{b+2(m-1)+\varepsilon} e^{-z})  \qquad \text{as } z\rightarrow +\infty .
\end{align}

This is not yet the asymptotic expression for $\varepsilon_m(z)$ we want to prove, but it is a good upper bound on its decay rate. To find the asymptotic expression for $\varepsilon_m(z)$, we can feed back this upper bound in our calculation of asymptotics of $p_m(z)$ and $\tilde{q}_m(z)$ to improve them further. 

With the aid of \eqref{eq:epsm_asymptotic_simple}, we now know that for any $\varepsilon >0$ and any fixed $k\in\mathbb{N}_0$
\begin{align}
(-(m-1) - \varepsilon_m(z))_k z^k= (-(m-1))_k z^k (1+O( z^{b+2(m-1)+\varepsilon} e^{-z})) \qquad \text{as } z\rightarrow +\infty. \nonumber
\end{align}
Compared to \eqref{eq:pm_asymptotic_simple}, this leads to the improved asymptotic
\begin{align} \label{eq:pm_asymptotic_improved}
p_m(z) = \sum\limits_{k=0}^{m-1} \frac{(-(m-1))_k}{(b)_k k!} z^k + o(1) \qquad \text{as } z\rightarrow +\infty.
\end{align}
Next, we improve our lower bound for $|\tilde{q}_m(z)|$. We note that for any $k \in \mathbb{N}$ 
\begin{align}
(1-x)_{k} = \sum\limits_{j=0}^k c_{k,j} x^j \nonumber
\end{align}
where $c_{k,j}$ are suitable coefficients with $c_{k,0}=(1)_k$ and $\text{sgn}(c_{k,j}) = (-1)^j$. For $x = -1$, we see that
\begin{align}
\sum_{j=0}^k |c_{k,j}| = \sum_{j=0}^k c_{k,j} (-1)^j = (2)_k  \nonumber
\end{align}
and in particular $|c_{k,j}| \leq (2)_k $ for all $0\leq j\leq k$. Using this, we deduce that
\begin{align}
&|{}_2F_2(1, 1 ; m+1, b+m;z) - {}_2F_2(1, 1 - \varepsilon_m(z); m+1, b+m;z)| \nonumber \\
= &\; \left| \sum\limits_{k=1}^{\infty} \frac{(1)_k [(1)_k -(1-\varepsilon_m(z))_{k}]  }{ (b+m)_k (1+m)_k} \frac{z^{k}}{k!}  \right| \nonumber \\
\leq &\; \sum\limits_{k=1}^{\infty} \sum\limits_{j=1}^k \frac{(1)_k |c_{k,j}|(\varepsilon_m(z))^j   }{ (b+m)_k (1+m)_k} \frac{z^{k}}{k!} \nonumber \\
\leq &\; \sum\limits_{j=1}^{\infty} (\varepsilon_m(z))^j \sum\limits_{k=j}^\infty \frac{(1)_k (2)_{k}   }{ (b+m)_k (1+m)_k} \frac{z^{k}}{k!}. \nonumber 
\end{align}
Now, with \eqref{eq:magnetic_disk_2F2_asymptotic} follows
\begin{align}
\sum\limits_{k=j}^\infty \frac{(1)_k (2)_{k}   }{ (b+m)_k (1+m)_k} \frac{z^{k}}{k!} &\leq \sum\limits_{k=0}^\infty \frac{(1)_k (2)_{k}   }{ (b+m)_k (1+m)_k} \frac{z^{k}}{k!} \nonumber \\
&= {}_2F_2(1,2;m+1,b+m;z) \nonumber \\
&=  O( z^{-b-2(m-1)}e^z )  \qquad \text{as } z\rightarrow +\infty \nonumber
\end{align}
and with \eqref{eq:epsm_asymptotic_simple} we get
\begin{align}
\sum\limits_{j=1}^{\infty} (\varepsilon_m(z))^j \sum\limits_{k=j}^\infty \frac{(1)_k (2)_{k}   }{ (b+m)_k (1+m)_k} \frac{z^{k}}{k!}\leq &\;  \frac{\varepsilon_m(z)}{1- \varepsilon_m(z)}\, {}_2F_2(1,2;m+1,b+m;z)  \nonumber \\
=&\; O(z^{\varepsilon} ) \qquad \text{as } z\rightarrow +\infty \nonumber
\end{align}
with $\varepsilon >0$ arbitrary. Hence, we can improve the previous estimate of $|\tilde{q}_m(z)|$ to
\begin{align}
 |\tilde{q}_m(z)| &= \frac{ (1+\varepsilon_m(z))_{m-1} z^m }{(1)_m (b)_m} \, {}_2F_2(1, 1 - \varepsilon_m(z); m+1, b+m;z) \nonumber \\
 &\geq \frac{ (1)_{m-1} z^m }{(1)_m (b)_m} \, ({}_2F_2(1, 1; m+1, b+m;z) + O(z^\varepsilon)) \nonumber \\
 &= \frac{\Gamma(m) \Gamma(b+m)}{ (b)_m}   z^{1 -b-m}e^z (1+O(z^{-1}))  \qquad \text{as } z\rightarrow +\infty \nonumber
 \end{align}
which gives with \eqref{eq:pm_asymptotic_improved}
\begin{align}
\varepsilon_m(z) = \left| \frac{p_m(z)}{\tilde{q}_{m}(z)} \right| \leq    \frac{  1}{\Gamma(m) \Gamma(b+m-1)}     z^{b+2(m-1)} e^{-z} (1+O(z^{-1}))  \qquad \text{as } z\rightarrow +\infty. \label{eq:proof3_upperbnd}
\end{align}
This upper bound is the asymptotic expression we are looking for. It remains to establish the same asymptotic expression as a lower bound. But this is not difficult. Since 
\begin{align}
(1+\varepsilon_m(z))_{m-1} = (1)_{m-1} (1+O(\varepsilon_m(z))) = (1)_{m-1} (1+O(z^{-1})) \qquad \text{as } z\rightarrow +\infty,  \nonumber
\end{align}
one can estimate $\tilde{q}_m(z)$ also from above by
\begin{align}
 |\tilde{q}_m(z)| & =\frac{ (1+\varepsilon_m(z))_{m-1} z^m }{(1)_m (b)_m} \, {}_2F_2(1, 1 - \varepsilon_m(z); m+1, b+m;z) \nonumber \\
  &\leq \frac{(1)_{m-1} z^m }{(1)_m (b)_m} (1+O(z^{-1})) \, {}_2F_2(1, 1 ; m+1, b+m;z) \nonumber \\
 &= \frac{ \Gamma(m)  \Gamma(b+m)  }{ (b)_m}  z^{1-b-m}e^z (1+O(z^{-1}))  \qquad \text{as } z\rightarrow +\infty \nonumber
 \end{align}
which then yields with \eqref{eq:pm_asymptotic_improved} the corresponding lower bound
\begin{align}
\varepsilon_m(z) = \left| \frac{p_m(z)}{\tilde{q}_{m}(z)} \right| &\geq \frac{ (b)_m}{ (b)_{m-1}  \Gamma(b+m) (m-1)! }  z^{b+2(m-1)}e^{-z} (1+O(z^{-1}))  \nonumber \\
&=   \frac{  1}{\Gamma(m) \Gamma(b+m-1)}     z^{b+2(m-1)} e^{-z} (1+O(z^{-1}))  \qquad \text{as } z\rightarrow +\infty. \label{eq:proof3_lowerbnd}
\end{align}
Putting the bounds \eqref{eq:proof3_upperbnd} and \eqref{eq:proof3_lowerbnd} for $\varepsilon_m(z)$ together, we have established
\begin{align} \label{eq:eps_asymptotic}
\varepsilon_m(z) =  \frac{ 1}{\Gamma(m) \Gamma(b+m-1)}     z^{b+2(m-1)} e^{-z} (1+O(z^{-1})) \qquad \text{as } z\rightarrow +\infty.
\end{align}
Finally, when we set $b=|l|+1$, $z = BR^2/2$ and employ the relation
\begin{align}
\frac{ \lambda_{m,l}(B)}{B}  = l+ |l|+1-2a_m \left( \frac{BR^2}{2}  \right)  =  l+ |l|+1+2(m-1)+2\varepsilon_m \left( \frac{BR^2}{2}  \right) , \nonumber
\end{align}
our result \eqref{eq:eps_asymptotic} becomes the second statement of the theorem.

\section{Proof of Theorem \ref{thm:magnetic_polya_general_set} and Corollary \ref{cor:magnetic_disk_trace}} \label{sec:proof_thm2_2}

We begin with the proof of Theorem \ref{thm:magnetic_polya_general_set}. Given any $\eta>0$, we need to find an eigenvalue $\lambda_n(\Omega,B)$ that satisfies 
\begin{align} \label{eq:polya_whattoshow}
\lambda_n(\Omega,B) \leq \left( \frac{1}{2} + \eta \right) \frac{4\pi n}{|\Omega|}
\end{align}
for some $n=n(B)$ when $B$ is large enough. The index $n(B)$ cannot be chosen independent of $B$ but generally needs to increase with $B$. This is because $\lambda_n(\Omega,B)\geq B$ for any $n\in\mathbb{N}$ (for disks, this follows from Theorem \ref{thm:eigenvalue_asymptotics_disk}, but for general domains $\Omega$, see e.g.\ \cite[Lemma 1.4.1]{Fournais2010}) and therefore the bound \eqref{eq:polya_whattoshow} cannot be satisfied for  
\begin{align}
n < \left( \frac{1}{2} + \eta \right)^{-1} \frac{|\Omega|B}{4\pi }.  \nonumber
\end{align}
We will see that one can choose $n(B)$ essentially linearly increasing with $B$.

The proof of Theorem \ref{thm:magnetic_polya_general_set} is divided into three steps. First, we show the assertion of the theorem in the case where $\Omega$ is a disk. We are then able to lift the result from disks to finite unions of disjoint disks. The generalization to arbitrary domains is done using coverings by finite unions of disjoint disks and a domain inclusion argument. We conclude the section by proving the corresponding result for Riesz means.

\subsection{Proof of Theorem \ref{thm:magnetic_polya_general_set} for disks} \label{subsec:proof_of_thm_polya_disks}

Although we know the eigenvalue branches $\lambda_{m,l}(B)$ for the disk in terms of roots of Kummer's function, there remains the difficulty of relating the eigenvalue branches $\lambda_{m,l}(B)$ to the sorted eigenvalues $\lambda_n(D_R,B)$. We need to identify $\lambda_{m,l}(B)$ with $\lambda_n(D_R,B)$ for certain $n$ and also find estimates on the branches $\lambda_{m,l}(B)$. We will gain such estimates from a further investigation of the Kummer function.

It will suffice for us to characterize all eigenvalues $\lambda_n(D_R,B)\leq 3B$. Theorem \ref{thm:eigenvalue_asymptotics_disk} tells us that $\lambda_{m,l}(B) > 3B$ for $m\geq 2$ or $l>0$, so all eigenvalues $\lambda_n(D_R,B) \leq 3B$ must correspond to eigenvalue branches $\lambda_{1,l}(B)$ with $l\leq 0$. As seen in Figure \ref{fig:diamagnetism}, the number of branches $\lambda_{1,l}(B)\leq 3B$ depends on the field strength. We can count these branches by counting the intersection points between $\lambda_{1,l}(B)$ and the line $\lambda = 3B$. In general, intersections of the eigenvalue branches $\lambda_{m,l}(B)$ with lines $\lambda = (l+ |l|+1 + 2k) B$, $k\in\mathbb{N}$, can be identified with help of Lemma \ref{lem:am_zm_roots}.

Recall that $\lambda_{m,l}(B)/B$ is strictly decreasing, so $\lambda_{m,l}(B)$ has at most one intersection with a line $\lambda = (l+ |l|+1 + 2k) B$, $k\in\mathbb{N}$. By the lower bound given in Theorem \ref{thm:eigenvalue_asymptotics_disk}, the branch $\lambda_{m,l}(B)$ has no intersection with $\lambda = (l+ |l|+1 + 2k) B$ if $k\leq m-1$. If $k\geq m$, there is an intersection and Lemma \ref{lem:am_zm_roots} yields the following.

\begin{corollary} \label{cor:magnetic_disk_lambda_explicit}
Let $m\in\mathbb{N}$, $l\in\mathbb{Z}$ and $k\in\mathbb{N}$, $k\geq m$. Then $\lambda_{m,l}(B) = (l+ |l|+1 + 2k) B$ if and only if $B = 2z_m(-k,|l|+1)/R^2$. Here, $z_m(a,b)$ denotes the $m$-th positive root of $z\mapsto M(a, b, z)$.
\end{corollary}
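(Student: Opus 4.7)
The plan is to rewrite the equation $\lambda_{m,l}(B) = (l+|l|+1+2k)B$ in terms of the negative root $a_m$ of Kummer's function using the identity \eqref{eq:eigenvalue_lambda_a_m_relation}, and then invert that relation via Lemma \ref{lem:am_zm_roots}. Because the eigenvalue branch is defined precisely so that the arguments line up with roots of $a\mapsto M(a,|l|+1,z)$, the corollary should reduce to a direct ``dictionary translation'' once the invertibility statement is in hand.

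Concretely, I would substitute into \eqref{eq:eigenvalue_lambda_a_m_relation} and divide by $B>0$ to obtain that $\lambda_{m,l}(B) = (l+|l|+1+2k)B$ is equivalent to the single scalar equation
\begin{equation*}
a_m\!\left(|l|+1,\; \frac{BR^2}{2}\right) = -k.
\end{equation*}
The whole claim then amounts to determining when this equation admits a (positive) solution in $B$ and to writing that solution explicitly. This is precisely the content of Lemma \ref{lem:am_zm_roots}: for $b=|l|+1$ fixed, the map $z\mapsto a_m(b,z)$ is a bijection from $(0,\infty)$ onto $(-\infty,-(m-1))$ with inverse $a\mapsto z_m(a,b)$. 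Since the hypothesis $k\geq m$ gives $-k \leq -m < -(m-1)$, the value $-k$ lies in the image of $a_m(b,\cdot)$, and the equation above is equivalent to $BR^2/2 = z_m(-k,|l|+1)$, i.e.\ to $B = 2z_m(-k,|l|+1)/R^2$.

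The only point that deserves a brief comment, rather than a genuine obstacle, is the necessity of the hypothesis $k\geq m$: if $k<m$ then $-k\in[-(m-1),0)$ lies outside the image of $a_m(b,\cdot)$, and correspondingly $z_m(-k,|l|+1)$ is not defined because $z\mapsto M(-k,|l|+1,z)$ has only $k$ positive roots. This is consistent with the strict lower bound $\lambda_{m,l}(B)/B > l+|l|+1+2(m-1)$ from Theorem \ref{thm:eigenvalue_asymptotics_disk}, which a priori rules out any intersection of the branch $\lambda_{m,l}(B)$ with a line $\lambda = (l+|l|+1+2k)B$ for $k<m$. Beyond this bookkeeping, no further calculation is needed.
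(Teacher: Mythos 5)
Your argument is correct and is essentially identical to the paper's proof: rewrite the condition via \eqref{eq:eigenvalue_lambda_a_m_relation} as $a_m(|l|+1, BR^2/2) = -k$ and invert using Lemma \ref{lem:am_zm_roots}, noting that $-k \in (-\infty,-(m-1))$ precisely because $k \geq m$. Nothing further is needed.
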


For the proof, recall also that we denoted the $m$-th negative root of $a\mapsto M(a, b, z)$ earlier by $a_m(b,z)$, sorted in decreasing order.  

\begin{proof}
According to \eqref{eq:eigenvalue_lambda_a_m_relation}, $\lambda_{m,l}(B)$ is related to the root $a_m(b,z)$ by
\begin{align}
\lambda_{m,l}(B)= \left( l+ |l|+1 -2a_m\left(|l|+1,\frac{BR^2}{2}\right) \right) B. \nonumber
\end{align}
Therefore, $\lambda_{m,l}(B) = (l+ |l|+1 + 2k) B$ if and only if 
\begin{align}
a_m\left(|l|+1,\frac{BR^2}{2}\right) = -k. \nonumber
\end{align}
Since $z_m$ is the inverse of $a_m$ with respect to $a\in(-\infty, -(m-1))$ for any fixed $b>0$ by Lemma \ref{lem:am_zm_roots} and $-k \in (-\infty, -(m-1))$ for $k\geq m$, we see that
\begin{align}
a_m\left(|l|+1,\frac{BR^2}{2}\right) = -k \qquad \text{if and only if} \qquad \frac{BR^2}{2} = z_m(-k,|l|+1). \nonumber
\end{align}
\end{proof}

For $k\in\mathbb{N}$, the power series of $M(-k,b,z)$ turns into a polynomial of degree $k$ which allows explicit computation of the roots $z_m(-k,b)$ for low $k$. In particular, if $k=1$, the polynomial $M(-1, b, z)= 1-z/b$ is linear and its only root is $z=b$. This means $z_1(-1,b)=b$ and $a_1(b,b) = -1$. With $b=|l|+1$, the corollary implies $\lambda_{1,l}(B) = 3 B $ if and only if $l\leq 0$ and 
\begin{align}
B = \frac{2z_1(-1,|l|+1)}{R^2}= \frac{2}{R^2}(|l|+1).\nonumber
\end{align}

This allows us to give a simple characterization of sorted eigenvalues $\lambda_n(D_R,B)$.

\begin{corollary} \label{cor:magnetic_disk_lambda_lowest}
Let $n\in\mathbb{N}$. If $B \geq 2n/R^2$, then
\begin{align}
\lambda_n(D_R,B) = \lambda_{1,-(n-1)}(B). \nonumber
\end{align}
\end{corollary}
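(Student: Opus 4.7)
I would prove the corollary by counting all eigenvalue branches lying at or below the line $\lambda = 3B$. From Theorem~\ref{thm:eigenvalue_asymptotics_disk} one has $\lambda_{m,l}(B)/B > l + |l| + 1 + 2(m-1) \geq 3$ whenever $m \geq 2$ or $l > 0$, so every eigenvalue not exceeding $3B$ must come from a branch of the form $\lambda_{1,-j}(B)$ with $j \geq 0$. Applying Corollary~\ref{cor:magnetic_disk_lambda_explicit} with $k=1$, together with the observation that $M(-1, b, z) = 1 - z/b$ has its unique positive zero at $z = b$ (so $z_1(-1, j+1) = j+1$), yields $\lambda_{1,-j}(B) = 3B$ exactly at $B = 2(j+1)/R^2$. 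Combined with the strict decrease of $B \mapsto \lambda_{m,l}(B)/B$ this gives the crisp characterization
\[
\lambda_{1,-j}(B) \leq 3B \quad \Longleftrightarrow \quad B \geq \frac{2(j+1)}{R^2}.
\]
At the threshold $B = 2n/R^2$, the branches $\lambda_{1,0}(B), \ldots, \lambda_{1,-(n-2)}(B)$ lie strictly below $3B$, $\lambda_{1,-(n-1)}(B)$ equals $3B$ exactly, and every other branch is strictly above $3B$; hence $\lambda_n(D_R, 2n/R^2) = \lambda_{1,-(n-1)}(2n/R^2)$ and the corollary holds at the threshold.

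To extend the identification to all $B \geq 2n/R^2$, the key step is to show that $k \mapsto \lambda_{1,-k}(B)$ is strictly increasing in $k$ for each fixed $B > 0$. Via~\eqref{eq:eigenvalue_lambda_a_m_relation} this is equivalent to $b \mapsto a_1(b, z)$ being strictly decreasing for fixed $z > 0$; using Lemma~\ref{lem:am_zm_roots} and implicit differentiation of the identity $z_1(a_1(b,z), b) = z$, it is in turn equivalent to $b \mapsto z_1(a, b)$ being strictly increasing for fixed $a < 0$. I would prove the latter by means of the contiguous relation
\[
M(a, b+1, z) = \frac{b}{z}\bigl(M(a, b, z) - M(a-1, b, z)\bigr),
\]
which follows from the power series via the identity $(a)_k - (a-1)_k = k(a)_{k-1}$. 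Evaluating at $z = z_1(a, b)$ kills the first term on the right. The interlacing $z_1(a-1, b) < z_1(a, b) < z_2(a-1, b)$ (classical for integer $a$ via Laguerre polynomials, and extensible to continuous $a$ by a Sturm-type comparison applied to the self-adjoint form $(z^b e^{-z} y')' = a\, z^{b-1} e^{-z} y$ of the Kummer ODE) then forces $M(a-1, b, z_1(a, b)) < 0$. Therefore $M(a, b+1, z_1(a, b)) > 0$, and together with $M(a, b+1, 0) = 1 > 0$ this places the first positive zero of $M(a, b+1, \cdot)$ strictly beyond $z_1(a, b)$, as required.

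With the monotonicity established, the conclusion is immediate: for $B \geq 2n/R^2$ the branches at or below $3B$ are precisely $\lambda_{1,-j}(B)$ with $0 \leq j \leq \lfloor BR^2/2 \rfloor - 1$, they appear in the natural increasing order $\lambda_{1,0}(B) < \lambda_{1,-1}(B) < \ldots$, and every other branch lies strictly above $3B \geq \lambda_{1,-(n-1)}(B)$. The $n$-th smallest sorted eigenvalue is therefore $\lambda_{1,-(n-1)}(B)$. I expect the most technically delicate point to be the interlacing of zeros of $z \mapsto M(a, b, z)$ in the continuous parameter $a$, which is the step where a fully self-contained treatment of the monotonicity demands some care.
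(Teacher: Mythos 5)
Your main counting argument coincides with the paper's: Theorem \ref{thm:eigenvalue_asymptotics_disk} removes all branches with $m\geq 2$ or $l>0$ from the region $\lambda\leq 3B$, Corollary \ref{cor:magnetic_disk_lambda_explicit} with $k=1$ (and $z_1(-1,b)=b$) together with the strict decrease of $B\mapsto\lambda_{1,l}(B)/B$ gives $\lambda_{1,-j}(B)\leq 3B$ exactly for $B\geq 2(j+1)/R^2$, and the identification of $\lambda_n(D_R,B)$ then rests on the ordering $\lambda_{1,0}(B)<\lambda_{1,-1}(B)<\dots$ of the surviving branches. The one place where you depart from the paper is this ordering in $l$: the paper does not prove it but cites Son's thesis (Theorem 3.3.4 there), whereas you attempt a self-contained proof, and that attempt contains a genuine gap.

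Your contiguous relation $M(a,b+1,z)=\tfrac{b}{z}\bigl(M(a,b,z)-M(a-1,b,z)\bigr)$ is correct (it does follow from $(a)_k-(a-1)_k=k(a)_{k-1}$), and the reduction of the branch ordering to the monotonicity of $b\mapsto z_1(a,b)$ via \eqref{eq:eigenvalue_lambda_a_m_relation} and Lemma \ref{lem:am_zm_roots} is sound. The flaw is in the concluding inference: from $M(a,b+1,0)=1>0$ and $M(a,b+1,z_1(a,b))>0$ you cannot conclude that the first positive zero of $M(a,b+1,\cdot)$ lies beyond $z_1(a,b)$ --- a function positive at two points may vanish an even number of times in between. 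Closing this requires an extra counting step, for instance: both $M(a,b,\cdot)$ and $M(a,b+1,\cdot)$ have exactly $\lceil -a\rceil$ positive zeros, and the full interlacing of the zeros of $M(a,b,\cdot)$ and $M(a-1,b,\cdot)$ makes your relation force alternating signs of $M(a,b+1,\cdot)$ at the points $z_j(a,b)$, which then pins exactly one zero into each gap $(z_j(a,b),z_{j+1}(a,b))$ and none into $(0,z_1(a,b))$. Moreover, the interlacing itself is only sketched: the lower half $z_1(a-1,b)<z_1(a,b)$ does follow from Lemma \ref{lem:am_zm_roots} (monotonicity of $z_1(\cdot,b)$), but the upper half $z_1(a,b)<z_2(a-1,b)$ for non-integer $a$ does not, and the Sturm comparison in the self-adjoint form $(z^b e^{-z}y')'=a\,z^{b-1}e^{-z}y$ needs genuine care at the singular endpoint $z=0$ where both solutions equal $1$. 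As written, the monotonicity step is therefore not established; the cheapest repair is to do as the paper does and quote $\lambda_{1,l'}(B)>\lambda_{1,l}(B)$ for $l'<l\leq 0$ from Son's thesis (equivalently, the classical monotonicity in $b$ of the positive zeros of Kummer's function), after which the rest of your argument goes through.
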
 

\begin{proof}
We have just shown with Corollary \ref{cor:magnetic_disk_lambda_explicit} that 
\begin{align}
\lambda_{1,-(n-1)}(B) = 3B \qquad \text{if and only if } \qquad B=\frac{2n}{R^2}, \nonumber
\end{align}
thus, since $\lambda_{1,l}(B)/B$ is strictly monotone decreasing with respect to $B$, we have $\lambda_{1,-(n-1)}(B) \leq 3B$ for $B \geq 2n/R^2$.
Due to the fact that for any $B>0$
\begin{align}
\lambda_{1,l'}(B)> \lambda_{1,l}(B) \qquad \text{for } l' < l \leq 0, \nonumber
\end{align}
see \cite[Theorem 3.3.4]{Son2014}, we get
\begin{align}
\lambda_{1,0}(B) < \lambda_{1,-1}(B) < ... < \lambda_{1,-(n-1)}(B) \leq 3B \qquad \text{for }B \geq \frac{2n}{R^2} \nonumber
\end{align}
and $\lambda_{1,l}(B) > \lambda_{1,-(n-1)}(B)  $ for any $l \leq -n$. Recall that by Theorem \ref{thm:eigenvalue_asymptotics_disk}, if $m \geq 2$ or $l> 0$, then $\lambda_{m,l}(B) > 3B$ and hence $\lambda_{m,l}(B) > \lambda_{1,-(n-1)}(B)  $ if $B \geq 2n/R^2$. This shows that if $B \geq 2n/R^2$, there are no smaller eigenvalues than $\lambda_{1,0}(B)$, ..., $\lambda_{1,-(n-1)}(B)$ and since they are already sorted by magnitude, we can identify them with $\lambda_1(D_R,B)$, ..., $\lambda_n(D_R,B)$. 
\end{proof}

Let us now derive estimates for eigenvalue branches $\lambda_{1,l}(B)$, $l\leq 0$. First, we show the following lemma concerning Kummer's function.

\begin{lemma} \label{lem:kummer_M_delta_eps}
Let $0<\delta< 1$ and $\varepsilon>0$. There exists $b_{\delta,\varepsilon}>0$ such that for any $b>b_{\delta,\varepsilon}$
\begin{align}
M(-\delta, b, (1+\varepsilon) b) < 0. \nonumber
\end{align}
\end{lemma}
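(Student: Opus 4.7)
The plan is to show that in the Taylor series
\[
M(-\delta, b, z) = 1 + \sum_{k=1}^{\infty} T_k, \qquad T_k := \frac{(-\delta)_k\, z^k}{(b)_k\, k!},
\]
one single term with $k\ge 1$ already has absolute value larger than $1$ when $b$ is large. The key sign observation is that for $0<\delta<1$ the Pochhammer symbol $(-\delta)_k = (-\delta)(1-\delta)(2-\delta)\cdots(k-1-\delta)$ contains exactly one negative factor, namely $-\delta$, while every subsequent factor $j-\delta$ for $1\le j\le k-1$ is positive. Hence $T_k<0$ for all $k\ge 1$ and $z>0$, and consequently
\[
M(-\delta, b, z) \;\le\; 1 + T_{k^*} \;=\; 1 - |T_{k^*}|
\]
for \emph{any} single index $k^*\ge 1$. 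It therefore suffices to exhibit one such $k^*$ with $|T_{k^*}|>1$.

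To locate a good candidate, I inspect the ratio
\[
\frac{|T_{k+1}|}{|T_k|} = \frac{(k-\delta)(1+\varepsilon)\,b}{(b+k)(k+1)},
\]
which crosses $1$ at $k\approx \varepsilon b$, so the terms $|T_k|$ peak there. I will take $k^* = \lfloor \varepsilon b\rfloor$. Using gamma functions to rewrite
\[
|T_{k^*}| = \frac{1}{|\Gamma(-\delta)|}\cdot \frac{\Gamma(k^*-\delta)}{\Gamma(k^*+1)} \cdot \frac{\Gamma(b)\,\bigl((1+\varepsilon)b\bigr)^{k^*}}{\Gamma(b+k^*)},
\]
I apply Stirling's formula to the gamma functions of arguments of order $b$. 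The first quotient gives a polynomial contribution of order $(\varepsilon b)^{-\delta-1}$, while a direct Stirling computation of $\log\!\bigl[\Gamma(b)\,((1+\varepsilon)b)^{\varepsilon b}/\Gamma(b(1+\varepsilon))\bigr]$ shows that, after all cancellations between the $b\log b$ terms, the second factor contributes $\exp\!\bigl(b[\varepsilon - \log(1+\varepsilon)] + O(\log b)\bigr)$.

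The decisive inequality is the elementary one $\varepsilon - \log(1+\varepsilon) > 0$ for every $\varepsilon>0$. Therefore the exponential factor $\exp(b[\varepsilon-\log(1+\varepsilon)])$ dominates the polynomial $(\varepsilon b)^{-\delta-1}$, and $|T_{k^*}|\to +\infty$ as $b\to +\infty$. Choosing $b_{\delta,\varepsilon}$ large enough that both $k^*\ge 1$ and $|T_{k^*}|>1$ hold for all $b>b_{\delta,\varepsilon}$ then yields $M(-\delta,b,(1+\varepsilon)b) \le 1 - |T_{k^*}| < 0$, which is the claim.

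The only real labour is the Stirling bookkeeping: three gamma functions with arguments of order $b$ enter simultaneously, and one must verify that the $O(\log b)$ subexponential corrections, together with the fact that $k^*=\lfloor \varepsilon b\rfloor$ differs from $\varepsilon b$ by $O(1)$, do not spoil the leading $\exp(b\,[\varepsilon-\log(1+\varepsilon)])$ growth. A more refined but technically heavier alternative would apply Kummer's transformation $M(-\delta, b, z) = e^z M(b+\delta, b, -z)$ and perform a saddle-point analysis to obtain sharp asymptotics, but for the purely qualitative statement of the lemma the single-term argument above is the shortest route.
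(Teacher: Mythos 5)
Your argument is correct, but it takes a genuinely different route from the paper's. You use the same key sign observation — for $0<\delta<1$ every term $T_k$, $k\ge 1$, of the Kummer series at $z=(1+\varepsilon)b$ is negative, so $M(-\delta,b,(1+\varepsilon)b)\le 1+T_{k^*}$ for any single index $k^*$ — but you then follow one $b$-dependent term near the peak, $k^*=\lfloor\varepsilon b\rfloor$, and show via Stirling that $|T_{k^*}|=\exp\bigl(b[\varepsilon-\log(1+\varepsilon)]+O(\log b)\bigr)\to+\infty$, the decisive point being $\varepsilon-\log(1+\varepsilon)>0$; your gamma-function rewriting and the leading-order cancellation are right, and the $O(\log b)$ corrections (including $k^*=\varepsilon b+O(1)$) indeed do not disturb the exponential growth. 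The paper instead works with fixed $k$: it compares the series termwise with its formal $b\to\infty$ limit $\sum_{k\ge1}\frac{(-\delta)_k}{k!}(1+\varepsilon)^k$, which diverges to $-\infty$ because the underlying power series has radius of convergence $1$ and $1+\varepsilon>1$; choosing $c\in(0,1)$ and a finite $N$ with $c\sum_{k=1}^{N}\frac{(-\delta)_k}{k!}(1+\varepsilon)^k<-1$, and using $b^k/(b)_k\to1$ for each $k\le N$, forces $M<0$ for large $b$. The paper's route is more elementary — no gamma asymptotics, only a radius-of-convergence argument and an elementary limit — whereas yours requires the Stirling bookkeeping you acknowledge, but it is quantitative: it shows the Kummer value is exponentially large in modulus, and it adapts naturally to letting $\delta$ and $\varepsilon$ depend on $b$, which is precisely the refinement the authors mention in their remark on convergence rates.
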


\begin{proof}
First observe that
\begin{align}
M(-\delta, b, (1+\varepsilon) b)&= 1+\sum\limits_{k=1}^\infty \frac{(-\delta)_k}{k!} \frac{b^k}{(b)_k}  (1+\varepsilon)^k \nonumber
\end{align}
where the series only sums negative terms when $0<\delta< 1$. The appearing series converges, however the series
\begin{align}
\sum\limits_{k=1}^\infty \frac{(-\delta)_k}{ k!}  (1+\varepsilon)^k, \nonumber
\end{align}
where we replaced the $b$-dependent fraction by one, is divergent for any $\varepsilon>0$. It tends to $-\infty$, since all summands are negative and the radius of convergence of the power series 
\begin{align}
\sum\limits_{k=1}^\infty \frac{(-\delta)_k}{ k!}  z^k \nonumber
\end{align}
is
\begin{align}
R = \left( \lim_{k \rightarrow +\infty } \left(\frac{(-\delta)_k}{ k!} \right)^\frac{1}{k} \right)^{-1}=\left( \lim_{k \rightarrow +\infty } \left(\frac{\Gamma(-\delta+k)}{\Gamma(-\delta) \Gamma(k+1)} \right)^\frac{1}{k} \right)^{-1} = 1. \nonumber
\end{align}
We can therefore choose some $c \in (0,1)$ and then some large $N\in\mathbb{N}$ such that  
\begin{align}
 c \sum\limits_{k=1}^N \frac{(-\delta)_k}{ k!}  (1+\varepsilon)^k <- 1. \nonumber
\end{align}
Furthermore, for any fixed $k\geq 1$ holds
\begin{align}
\frac{b^k}{(b)_k } \rightarrow 1 \qquad \text{as } b \rightarrow +\infty, \nonumber
\end{align}
thus there exists $b_{\delta,\varepsilon}>0$ such that for $b > b_{\delta,\varepsilon}$ 
\begin{align}
\frac{b^k}{(b)_k }\geq c \qquad \text{for all }  k=1,...,N. \nonumber
\end{align}
Hence, for $b > b_{\delta,\varepsilon}$
\begin{align}
M(-\delta, b, (1+\varepsilon) b)= 1+\sum\limits_{k=1}^\infty \frac{(-\delta)_k}{k!} \frac{b^k}{(b)_k}  (1+\varepsilon)^k \leq 1 + c \sum\limits_{k=1}^N \frac{(-\delta)_k}{ k!}  (1+\varepsilon)^k < 0. \nonumber
\end{align}
and we have shown the lemma.
\end{proof}

The lemma implies a bound on the roots in $a$ of Kummer's function and therefore on eigenvalue branches. With the assertion of the lemma and $M(0,b,(1+\varepsilon)b)=1>0$, we see that for any $b > b_{\delta,\varepsilon}$, the map $a \mapsto M(a, b, (1+\varepsilon) b )$ has at least one root in $ (-\delta,0)$ by the intermediate value theorem. In particular, we have 
\begin{align} \label{eq:a_1_estimate_delta}
-\delta < a_1(b,(1+\varepsilon)b) <0 \qquad \text{for }b > b_{\delta,\varepsilon}.
\end{align}
Again, recall that $a_1(b,(1+\varepsilon)b)$ relates to the eigenvalues $\lambda_{1,l}(B)$ via \eqref{eq:eigenvalue_lambda_a_m_relation}. When setting $b=|l|+1$ for $l\leq 0$ we see that
\begin{align}
\frac{ \lambda_{1,l}(B_{|l|+1})}{B_{|l|+1}} = 1 - 2a_1(|l|+1,(1+\varepsilon)(|l|+1)) \nonumber
\end{align}
where the magnetic field strength $B_{|l|+1}$ is given by
\begin{align}
B_{|l|+1} = \frac{2}{R^2}(1+ \varepsilon) (|l|+1). \nonumber
\end{align}
But then \eqref{eq:a_1_estimate_delta} implies that 
\begin{align}
\frac{ \lambda_{1,l}(B_{|l|+1})}{B_{|l|+1}} \leq 1+2\delta \qquad \text{for any } l\leq 0, |l| +1 > b_{\delta,\varepsilon} \nonumber
\end{align}
and since $\lambda_{1,l}(B)/B$ is strictly decreasing with $B$, we have 
\begin{align} \label{eq:lambda_1l_Bl1_estimate}
\frac{ \lambda_{1,l}(B)}{B} \leq 1+2\delta \qquad \text{for any } B\geq B_{|l|+1} \text{ and } l\leq 0, |l| +1 > b_{\delta,\varepsilon}.
\end{align}

With \eqref{eq:lambda_1l_Bl1_estimate} at hand, we are almost done. It remains to bring $B$ to the right hand side, choose $\delta$ and $\varepsilon$ appropriately and use Corollary \ref{cor:magnetic_disk_lambda_lowest} to identify $\lambda_{1,-(n-1)}(B)$ with $\lambda_n(D_R, B)$.

Let $\eta > 0$ be arbitrary. Choose $\varepsilon>0$ and $0<\delta<1$ small enough such that $(1+2\delta)(1+ \varepsilon) < 1+ 2 \eta$. After that choose $b^* \geq b_{\delta,\varepsilon}$ large enough such that 
\begin{align}
(1+2\delta)(1+ \varepsilon) \left( 1+ \frac{1}{b^*} \right)\leq 1+ 2 \eta. \nonumber
\end{align}
Assume $|l|+1 > b^* $ and 
\begin{align}
B = \frac{2}{ R^2}(1+ \tilde{\varepsilon}) (|l|+1) , \qquad \text{where } \tilde{\varepsilon} \in \left[ \varepsilon, \varepsilon + \frac{1+\varepsilon}{b^*} \right].  \label{eq:magnetic_disk_Bl_interval} 
\end{align}
Since $\tilde{\varepsilon}\geq \varepsilon$, we have $B\geq B_{|l|+1}$ and therefore \eqref{eq:lambda_1l_Bl1_estimate} holds. Hence, it follows that
\begin{align}
\lambda_{1,l}(B)&\leq (1+2\delta) B = (1+2\delta)\frac{2}{ R^2}(1+ \tilde{\varepsilon}) (|l|+1) \nonumber \\
&\leq (1+2\delta)\left(1+ \varepsilon + \frac{1+\varepsilon}{b^*}\right) \frac{2\pi}{|D_R|}(|l|+1) \nonumber \\
&=(1+2\delta)(1+ \varepsilon) \left( 1 + \frac{1}{b^*}\right)\frac{2\pi}{|D_R|}(|l|+1) \nonumber \\
&\leq (1+ 2 \eta) \frac{2\pi }{|D_R|} (|l|+1). \nonumber
\end{align}
Note that if $|l|+1 > b^* $ and $B \in [B_{|l|+1}, B_{|l|+2}]$ then $B$ can be written as in \eqref{eq:magnetic_disk_Bl_interval} and thus we have shown that
\begin{align}
\lambda_{1,l}(B) \leq (1+ 2 \eta) \frac{2\pi }{|D_R|} (|l|+1), \qquad \text{for } B \in [B_{|l|+1}, B_{|l|+2}], \; l\leq 0, \; |l|+1 > b^*. \nonumber
\end{align}
Now let $n = -l + 1 = |l|+1$. Then $B \in [B_{|l|+1}, B_{|l|+2}]$ implies $B > 2n/R^2$ and therefore by Corollary \ref{cor:magnetic_disk_lambda_lowest}
\begin{align} \label{eq:polya_disk_proof_eventually}
\lambda_n(D_R, B) = \lambda_{1,-(n-1)}(B) = \lambda_{1,l}(B) \leq (1+ 2 \eta) \frac{2\pi }{|D_R|} (|l|+1) =  \left(\frac{1}{2} + \eta \right) \frac{4\pi n}{|D_R|}  
\end{align}
for $B \in [B_{n}, B_{n+1}], \; n> b^*$. The desired inequality hence always holds for some eigenvalue if $B \geq \min_{n > b^*} B_{n} $.

\begin{remark}
We remind the reader that Theorem \ref{thm:magnetic_polya_general_set} is equivalent to \eqref{eq:lim_polya_excess_factor}. The convergence rate of the limit in \eqref{eq:lim_polya_excess_factor} can be quantified in case of the disk. The key idea is to improve Lemma \ref{lem:kummer_M_delta_eps} by letting $\delta$ and $\varepsilon$ depend on $b$ and such that
\begin{align}
M(-\delta(b),b,b(1+\varepsilon(b)))<0 \nonumber
\end{align}
still holds for $b$ large enough. This eventually yields an upper bound of the form \eqref{eq:polya_disk_proof_eventually} with $\eta=\eta(b)$ depending on the choice of functions $\delta=\delta(b)$ and $\varepsilon=\varepsilon(b)$. By optimizing $\delta(b)$, $\varepsilon(b)$ and relating $b$ to $B$, one can derive explicit convergence rates of the form \eqref{eq:polya_excess_factor_landau}.
\end{remark}

\subsection{Proof of Theorem \ref{thm:magnetic_polya_general_set} for disjoint unions of finitely many disks} \label{subsec:proof_of_thm_disjoint_disks}

Let $\eta>0$ be arbitrary. Let $N\in\mathbb{N}$ and $\Omega = \Omega_1 \cup ... \cup \Omega_N$ be a disjoint union of $N$ disks $\Omega_i=D_{R_i}$ with radii $R_i>0$. Then of course $|\Omega| = |\Omega_1| + ... + |\Omega_N|=\pi \sum_{i=1}^N R_i^2$. By the previous proof for a single disk, we can choose a small $\varepsilon>0$ so that for all $i$ exist $b^*_i$ such that
\begin{align}
\lambda_n(\Omega_i, B) \leq (1+ 2 \eta) \frac{2\pi n}{|\Omega_i|} , \qquad \text{for } B \in [B_{n}^{(i)}, B_{n+1}^{(i)}],  \; n > b^*_i, \nonumber
\end{align}
where 
\begin{align}
B_{n}^{(i)} = \frac{2}{R_i^2}(1+\varepsilon)n. \nonumber
\end{align}
Let $\eta'>0$ and let $B_{\eta'}>0$ be large enough such that
\begin{align}
\frac{1}{1-\frac{2\pi (1+\varepsilon)N}{|\Omega|B}}\leq 1+\eta' \nonumber
\end{align} 
for $B\geq B_{\eta'}$. Then assume $B\geq  \max\lbrace \max_{1\leq i \leq N} B_{{\lceil b_i^* \rceil }}^{(i)}, B_{\eta'} \rbrace$. For such $B$, there exists for all $i$ an $n_i$ with $B \in [B_{n_i}^{(i)}, B_{n_i+1}^{(i)}]$ and we have seen that for this $n_i$ holds
\begin{align}
\lambda_{n_i}(\Omega_i, B) \leq (1+ 2 \eta) \frac{2\pi n_i}{|\Omega_i|} . \nonumber
\end{align}
The spectrum of $\Omega$ is just the union of the spectra of all $\Omega_i$, so for any $i$ exists an $n_i'$ such that $\lambda_{n_i'}(\Omega,B) = \lambda_{n_i}(\Omega_i,B)$. We now let $n:= \max_{1\leq i \leq N} n_i' $ so that $\lambda_n(\Omega,B)$ is the eigenvalue of $\Omega$ whose index corresponds to the largest of the eigenvalues $\lambda_{n_i}(\Omega_i,B)$. The index of the maximizing component shall be denoted by $i^*$ so that we have $\lambda_n(\Omega,B)= \lambda_{n_{i^*}}(\Omega_{i^*},B)$. By the choice of $n$ we necessarily have $n \geq  \sum_{i=1}^N n_i$. With this, we observe that
\begin{align}
\lambda_n(\Omega, B) =\lambda_{n_{i^*}}(\Omega_{i^*}, B) &\leq (1+ 2 \eta) \frac{2\pi n_{i^*}}{|\Omega_{i^*}|}  =  \frac{n_{i^*}}{n} \frac{|\Omega|}{|\Omega_{i^*}|} \left(\frac{1}{2}+\eta \right)  \frac{4\pi n}{|\Omega|}    , \nonumber
\end{align}
so all that remains to show is that $\frac{n_{i^*}}{n} \frac{|\Omega|}{|\Omega_{i^*}|}$ is close to one. From $B \in [B_{n_i}^{(i)}, B_{n_i+1}^{(i)}]$ follows
\begin{align} \label{eq:disjoint_Ri_estimate1}
\frac{B R_i^2}{2(1+\varepsilon)} -1 \leq n_i \leq \frac{B R_i^2}{2(1+\varepsilon)} 
\end{align}
for any $i$. Summing these inequalities over $i$ yields
\begin{align} \label{eq:disjoint_Ri_estimate2}
\frac{B \sum_{i=1}^N R_i^2}{2(1+\varepsilon)} -N \leq \sum_{i=1}^N n_i \leq \frac{B \sum_{i=1}^N R_i^2}{2(1+\varepsilon)} .
\end{align}
Since $n\geq \sum_{i=1}^N n_i$, the inequalities \eqref{eq:disjoint_Ri_estimate1} and \eqref{eq:disjoint_Ri_estimate2} imply
\begin{align}
\frac{n_{i^*}}{n} \leq \frac{\frac{B R_{i^*}^2}{2(1+\varepsilon)} }{\frac{B \sum_{i=1}^N R_i^2}{2(1+\varepsilon)} -N} =  \frac{ |\Omega_{i^*}|}{ |\Omega| -\frac{2\pi (1+\varepsilon)N}{B}} \nonumber
\end{align}
and thus
\begin{align}
\frac{n_{i^*}}{n} \frac{|\Omega|}{|\Omega_{i^*}|} \leq  \frac{1}{1-\frac{2\pi (1+\varepsilon)N}{|\Omega|B}}\leq 1+\eta'. \nonumber
\end{align}
Hence, we have 
\begin{align}
\lambda_n(\Omega, B) \leq (1 + \eta')   \left(\frac{1}{2}+\eta \right)  \frac{4\pi n}{|\Omega|}  . \nonumber
\end{align}
Because $\eta>0$ and $\eta'>0$ were arbitrary, this proves the assertion if $\Omega$ is a disjoint union of finitely many disks.

\subsection{Proof of Theorem \ref{thm:magnetic_polya_general_set} for general domains} \label{subsec:proof_of_thm_general_domains}

For the final step, we take advantage of the domain inclusion principle. This principle is well-known for the non-magnetic Dirichlet Laplacian and states that eigenvalues increase under domain inclusion. The same holds with a constant magnetic field.

\begin{lemma}[Domain inclusion principle] \label{prop:basics_domain_inclusion_principle}
Let $\Omega, \Omega'$ be open domains of finite measure with $\Omega' \subset \Omega$ and $B\geq 0$. Then 
\begin{align}
\lambda_n(\Omega',B) \geq \lambda_n(\Omega,B) \qquad \text{for } n \in\mathbb{N}. \nonumber
\end{align}
\end{lemma}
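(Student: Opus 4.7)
The plan is to apply the min-max principle. The operator $H_B^\Omega$ is defined in the paper as the Friedrichs extension of the quadratic form $a_\Omega[u] = \int_\Omega |(-i\nabla + A)u|^2 \, dx$ with form core $C_0^\infty(\Omega)$, and since $H_B^\Omega$ has compact resolvent, its $n$-th eigenvalue admits the variational characterization
\begin{align}
\lambda_n(\Omega, B) = \min_{\substack{V \subset C_0^\infty(\Omega) \\ \dim V = n}} \; \max_{0 \neq u \in V} \frac{a_\Omega[u]}{\|u\|_{L^2(\Omega)}^2}. \nonumber
\end{align}
The analogous formula holds for $\Omega'$ in place of $\Omega$.

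The key observation is trivial extension by zero. For $\Omega' \subset \Omega$ open and any $u \in C_0^\infty(\Omega')$, the support of $u$ is a compact subset of $\Omega'$, hence a compact subset of $\Omega$, so the extension $\tilde u$ of $u$ by zero to $\mathbb{R}^2$ belongs to $C_0^\infty(\Omega)$. Since the vector potential $A$ is defined globally on $\mathbb{R}^2$ and the integrand $|(-i\nabla + A)u|^2$ vanishes outside $\mathrm{supp}(u) \subset \Omega'$, one has the identities
\begin{align}
a_{\Omega'}[u] = a_\Omega[\tilde u], \qquad \|u\|_{L^2(\Omega')}^2 = \|\tilde u\|_{L^2(\Omega)}^2. \nonumber
\end{align}
Consequently, the linear map $u \mapsto \tilde u$ is an injection $C_0^\infty(\Omega') \hookrightarrow C_0^\infty(\Omega)$ that preserves all Rayleigh quotients. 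Any $n$-dimensional subspace $V \subset C_0^\infty(\Omega')$ gives rise to an $n$-dimensional subspace $\tilde V \subset C_0^\infty(\Omega)$ on which the Rayleigh quotient attains exactly the same maximum.

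Taking the infimum of such maxima over $n$-dimensional subspaces of $C_0^\infty(\Omega')$ therefore yields a value that is at least $\lambda_n(\Omega, B)$, since one is minimizing over a smaller (embedded) family of test subspaces for the larger domain. This gives $\lambda_n(\Omega', B) \geq \lambda_n(\Omega, B)$ as desired. There is no genuine obstacle: the argument is purely variational and identical in spirit to the standard non-magnetic domain monotonicity, with the only noteworthy point being that $A$ is defined on all of $\mathbb{R}^2$ so the quadratic form is unaffected by the extension. I also note that the assumption $B \geq 0$ is not used anywhere; the result holds for all $B \in \mathbb{R}$.
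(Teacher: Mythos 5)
Your argument is correct and is essentially the paper's proof, made explicit: trivial extension by zero embeds the test functions (equivalently, the form domain) of $\Omega'$ into those of $\Omega$ without changing the Rayleigh quotient, and the min-max principle gives the monotonicity; your remark that $B\geq 0$ is not needed is also accurate. The only cosmetic point is that over the form core $C_0^\infty$ the variational characterization should be stated with an infimum rather than a minimum, which is exactly how you then use it.
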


\begin{proof}
Since $C_0^\infty(\Omega') \subset C_0^\infty(\Omega)$ by trivial extension, the form closure of $C_0^\infty(\Omega')$ w.r.t.~the quadratic form of the magnetic Laplacian can be embedded in the form closure of $C_0^\infty(\Omega)$. The min-max-principle then shows the inequality between the eigenvalues.
\end{proof}

Let now $\Omega$ be any open set of finite measure. By Vitali's covering theorem there exists a countable collection of pairwise disjoint open disks $\lbrace \Omega_i \rbrace_{i=1}^\infty$ such that $\Omega_i\subset \Omega$ for all $i\in\mathbb{N}$ and
\begin{align}
\left| \Omega \setminus \bigcup_{i=1}^\infty \Omega_i\right| = 0. \nonumber
\end{align}
This means in particular that
\begin{align}
\left| \bigcup_{i=1}^N \Omega_i \right| \rightarrow |\Omega|  \qquad \text{as } N\rightarrow +\infty. \nonumber
\end{align}
In other words, for any $\eta'>0$ exists $\Omega'\subset \Omega $, $\Omega'$ being a disjoint union of finitely many disks, with $\dfrac{|\Omega|}{|\Omega'|} \leq 1+\eta'$. We have for $B$ large enough that 
\begin{align}
\lambda_n(\Omega', B) \leq (1+2\eta)  \frac{2\pi n}{|\Omega'|} . \nonumber
\end{align}
for some $n$. But then by domain monotonicity we arrive with such a disjoint union of disks $\Omega'$ at
\begin{align}
\lambda_n(\Omega, B) \leq \lambda_n(\Omega', B) \leq (1+2\eta)   \frac{2\pi n}{|\Omega'|}  \leq (1+2\eta)   \dfrac{|\Omega|}{|\Omega'|} \frac{2\pi n}{|\Omega|}  =(1 + \eta')  \left(\frac{1}{2}+\eta \right)   \frac{4\pi n}{|\Omega|} . \nonumber
\end{align}
Again, since $\eta$ and $\eta'>0$ were arbitrary, this shows the assertion for general $\Omega$ and finishes the proof.

\subsection{Proof of Corollary \ref{cor:magnetic_disk_trace}}

Corollary \ref{cor:magnetic_disk_trace} follows from Theorem \ref{thm:magnetic_polya_general_set}. We begin with the case $\gamma=0$. 

Let $\eta>0$ be arbitrary. Choose $\eta'>0$ small enough such that $2-\eta < 2 ( 1+2\eta')^{-1}$. By Theorem \ref{thm:magnetic_polya_general_set}, there exists $B_{\eta'}>0$ such that for any $B\geq B_{\eta'}$ exists an $n$ with
\begin{align}
2-\eta  < 2 ( 1+2\eta')^{-1}  \leq  \frac{4\pi n}{|\Omega| \lambda_n(\Omega, B)}   \leq 2 . \nonumber
\end{align}
Recall that $R_0 = 2$ and $L_{0,2}^{\mathrm{cl}} = 1/4\pi$. Thus, letting $\lambda \searrow \lambda_n(\Omega, B)$, we get for any $B\geq B_{\eta'}$
\begin{align} \label{eq:riesz_mean_delta_gamma0}
R_0 -\eta \leq \frac{\mathrm{tr} (H_B^\Omega - \lambda)_-^0 }{  L_{0,2}^{\mathrm{cl}}  |\Omega| \lambda }  \leq R_0. 
\end{align}
This is \eqref{eq:cor_magnetic_disk_trace} for $\gamma = 0$.

Let now $0<\gamma < 1$ and let $B\geq 0$ be fixed. Suppose there exists $R< R_\gamma$ with
\begin{align} \label{eq:cor_proof_1}
\mathrm{tr} (H_B^\Omega - \mu)_-^\gamma \leq R L_{\gamma,2}^{\mathrm{cl}}  |\Omega| \mu^{1+\gamma}
\end{align}
for all $\mu>0$. By \cite[Lemma 3.31]{Frank2022}, it is known that for any $\mu > \lambda$
\begin{align}\label{eq:cor_proof_2}
\mathrm{tr} (H_B^\Omega - \lambda)_-^0 \leq (\mu - \lambda)^{-\gamma} \mathrm{tr} (H_B^\Omega - \mu)_-^\gamma
\end{align}
and for any $0\leq \alpha < \beta$
\begin{align}\label{eq:cor_proof_3}
\inf_{\mu > \lambda} (\mu - \lambda)^{-\alpha} \mu^\beta =C(\alpha,\beta)^{-1} \lambda^{\beta-\alpha}.
\end{align}
where $C(\alpha,\beta):=\alpha^\alpha\beta^{-\beta}  (\beta-\alpha)^{\beta-\alpha}$. But from \eqref{eq:cor_proof_2} follows under assumption of \eqref{eq:cor_proof_1}
\begin{align}
\mathrm{tr} (H_B^\Omega - \lambda)_-^0 &\leq R L_{\gamma,2}^{\mathrm{cl}}  |\Omega| (\mu - \lambda)^{-\gamma}  \mu^{1+\gamma}  \nonumber
\end{align}
for any $\mu>\lambda>0$ and hence with \eqref{eq:cor_proof_3}
\begin{align}
\mathrm{tr} (H_B^\Omega - \lambda)_-^0\leq R C(\gamma,1+\gamma)^{-1} L_{\gamma,2}^{\mathrm{cl}}  |\Omega|  \lambda. \nonumber
\end{align}
This however yields 
\begin{align}
\frac{\mathrm{tr} (H_B^\Omega - \lambda)_-^0 }{  L_{0,2}^{\mathrm{cl}}  |\Omega| \lambda } \leq  R C(\gamma,1+\gamma)^{-1} \frac{L_{\gamma,2} ^{\mathrm{cl}} }{L_{0,2}}  <  R_\gamma \gamma^{-\gamma}(1+\gamma)^{\gamma}  = R_0 \nonumber
\end{align}
which contradicts the lower bound of \eqref{eq:riesz_mean_delta_gamma0}, if $\eta$ is small enough and if $B\geq B_{\eta'}$. Therefore, \eqref{eq:cor_magnetic_disk_trace} is also true for $0<\gamma < 1$.

For $\gamma\geq 1$, we have already discussed that \eqref{eq:cor_magnetic_disk_trace} is a consequence of the asymptotics for Riesz means.

\backmatter


\bmhead{Acknowledgements}

The first author is grateful for the hospitality by the Institut Mittag-Leffler and support by COST (Action CA18232) in June 2023. He also wants to express his gratitude to the organizers of the International Conference on Spectral Theory and Approximation 2023 in Lund and the granted financial support. Finally, he wants to thank S{\o}ren Fournais, Mikael Persson Sundqvist, Dirk Hundertmark and Semjon Vugalter for valuable discussions.

\section*{Declarations}

\begin{itemize}
\item Funding: No funding was received to assist with the preparation of this manuscript.
\item Author contribution: The first draft and subsequent revisions of the manuscript were written by Matthias Baur. Timo Weidl read and suggested improvements on previous versions of the manuscript. All authors read and approved the final manuscript.
\item Competing interests: The authors have no competing interests to declare.
\item Ethics approval and consent to participate: Not applicable
\item Consent for publication: Not applicable
\item Data availability: Not applicable 
\item Materials availability: Not applicable
\item Code availability: Not applicable
\end{itemize}

\bibliography{literature.bib}

\end{document}